\def\D{{\Delta}}
\def\G{{\Gamma}}
\def\F{{\mathcal F}}
\def\a{{\bold a}}
\def\c{{\bold c}}
\def\1{{\mathbf 1}}
\def\0{{\mathbf 0}}
\def\opn#1#2{\def#1{\operatorname{#2}}} 
\opn\diam{diam} 
\opn\depth{depth}
\opn\star{star}
\opn\lk{lk}
\newtheorem{Theorem}{Theorem}[section]
\newtheorem{Lemma}[Theorem]{Lemma}
\newtheorem{Corollary}[Theorem]{Corollary}
\newtheorem{Proposition}[Theorem]{Proposition}
\newtheorem{Example}[Theorem]{Example}
\begin{document}

\title{Equality of ordinary and symbolic powers\\ of Stanley-Reisner ideals}

\author{Ngo Viet Trung}
\address{Institute of Mathematics, 18 Hoang Quoc Viet, Hanoi, Vietnam}
\email{nvtrung@math.ac.vn}

\author{Tran Manh Tuan}
\address{Institute of Mathematics, 18 Hoang Quoc Viet, Hanoi, Vietnam}
\email{manhtuankhtn@gmail.com}

\thanks{The first author is partially supported by the National Foundation for Science and Technology Development} 
\keywords{Stanley-Reisner ideal, Cohen-Macaulay ideal, symbolic power, vertex cover, Ramsey graph} 
\subjclass{13C05, 13H10}

\begin{abstract}
This paper studies properties of simplicial complexes $\D$ with the equality  $I_\D^{(m)} = I_\D^m$ for a given $m \ge 2$.
The main results are combinatorial characterizations of such complexes in the two-dimensional case.
It turns out that there exist only a finite number of complexes with this property and that these complexes can be described completely.
As a consequence we are able to determine all complexes for which $I_\D^m$ is Cohen-Macaulay for some $m \ge 2$.
In particular, there are complexes with $I_\D^{(2)} = I_\D^2$ or $I_\D^{(3)} = I_\D^3$ but $I_\D^{(m)} \neq I_\D^m$ for all $m \ge 4$ and that if $I_\D^{(m)} = I_\D^m$ for some $m \ge 4$, then $I_\D^{(m)} = I_\D^m$ for all $m \ge 1$. Similarly, there are complexes for which $I_\D^2$ is Cohen-Macaulay but $I_\D^m$ is not Cohen-Macaulay for all $m \ge 3$ and if $I_\D^m$ is Cohen-Macaulay for some $m \ge 3$, then $I_\D$ is a complete intersection. 
\end{abstract}
\maketitle

\section*{Introduction}

Let $I_\D$ be the Stanley-Reisner ideal of a simplicial complex $\D$.  Given an integer $m \ge 2$, we want to know when $I_\D^m$ is a Cohen-Macaulay ideal.  For that we have to study when $I_\D^{(m)}$ is Cohen-Macaulay and when $I_\D^{(m)} = I_\D^m$, where $I_\D^{(m)}$ denotes the $m$-th symbolic power of $I_\D$. These properties are of interest from both algebraic and combinatorial points of view. They were usually investigated for all (large) powers of an ideal, and if they are satisfied, the ideal enjoys good properties. For instance, if $I_\D^m$ is Cohen-Macaulay for all large $m$, then $I_\D$ must be a complete intersection by \cite{CN} and $I_\D^{(m)} = I_\D^m$ for all $m\ge 1$ if and only if the hypergraph of the minimal nonfaces of $\D$ is Mengerian \cite{HHTZ}. However, little is known about these properties for a sole ideal power. \smallskip

The above problems were first studied for one-dimensional complexes in \cite{MT1}, where one can find combinatorial characterizations for the Cohen-Macaulayness of $I_\D^{(m)}$ and  a complete description of all complexes with $I_\D^{(m)} = I_\D^m$ in terms of the associated graph. 
There is a remarkable distinction between the case $m=2$ and $m\ge 3$ in the sense that there are complexes for which $I_\D^{(2)}$ is Cohen-Macaulay but $I_\D^{(m)}$ is not Cohen-Macaulay for all $m\ge 3$ and that if $I_\D^{(m)}$ is Cohen-Macaulay for some $m\ge 3$, then  $I_\D^{(m)}$ is Cohen-Macaulay for all $m \ge 1$. Similarly, there are also complexes with $I_\D^{(2)} = I_\D^2$ but  $I_\D^{(m)} \neq I_\D^m$ for all $m\ge 3$ and, if  $I_\D^{(m)} = I_\D^m$ for some $m\ge 3$, then  $I_\D^{(m)} = I_\D^m$ for all $m\ge 1$. A similar pattern was also found in \cite{RTY} for the case where $\D$ is a flag complex.  The combinatorial characterizations for the Cohen-Macaulayness of $I_\D^{(m)}$ were subsequently generalized  for simplicial complexes of arbitrary dimension in \cite{MT2}.  The results of \cite{MT1}, \cite{MT2}, \cite{RTY} have raised some general questions for complexes of a given dimension such as \smallskip

\noindent {\bf Question 1}. Is the number of complexes with $I_\D^{(m)} = I_\D^m$ for some $m\ge 2$ finite? \smallskip

\noindent {\bf Question 2}. Does there exist a number $m_0$ such that if $I_\D^{(m)} = I_\D^{m}$ for some $m \ge m_0$, then $I_\D^{(m)} = I_\D^m$ for all $m \ge 1$? \smallskip

To give a positive answer to Question 1 we only need to show that there is an upper bound for the number of the vertices in terms of $\dim \D$. Question 2 is closely related to the stability of associated primes of ideal powers \cite{Bro}. For monomial ideals, this stability has been recently studied in \cite{CM}, \cite{FHT}, \cite{HaM}, \cite{Ho}. From these works one can deduce that there is a number $m_0$ depending on the number of vertices such that if $I_\D^{(m)} = I_\D^{m}$ for some $m \ge m_0$, then $I_\D^{(t)} = I_\D^t$ for all $t \ge m$. However, these works don't provide an answer to Question 2. \smallskip

In this paper we will describe all two-dimensional complexes with $I_\D^{(m)} = I_\D^{m}$ for some $m \ge 2$. As consequences we obtain positive answers 
 to the above questions and we are able to determine all complexes for which $I_\D^m$ is Cohen-Macaulay for some $m \ge 2$.  The main tool is the description of symbolic powers by means of vertex covers of the complex in \cite{HHT}. The paper is divided into three sections.  \smallskip

In Section 1 we carry out preliminary investigations on the equality $I_\D^{(m)} = I_\D^m$. We shall see that this equality imposes strong conditions on the complex $\D$. If $\dim \D =2$, these conditions imply that the graph of the edges of $\D$ is a certain Ramsey graph. From this it follows that the number of the vertices must be very small. Hence there are only a finite number of complexes with $I_\D^{(m)} = I_\D^m$ for some $m \ge 2$. For $\dim \D \ge 3$, we can show that if $I_\D^{(m)} = I_\D^m$ for some $m \ge \dim\D + 2$, then the number of the vertices is bounded by $2(\dim \D+1)$ and that this bound is sharp.\smallskip

In Section 2 we will describe all two-dimensional complexes with $I_\D^{(m)} = I_\D^m$ for some $m \ge 2$. We give a combinatorial characterization of all complexes with $I_\D^{(2)} = I_\D^2$ and we determine all complexes with $I_\D^{(m)} = I_\D^m$ for some $m \ge 3$. It turns out that there are complexes with $I_\D^{(2)} = I_\D^2$ or $I_\D^{(3)} = I_\D^3$ but $I_\D^{(m)} \neq I_\D^m$ for all $m \ge 4$ and that if $I_\D^{(m)} = I_\D^m$ for some $m \ge 4$, then $I_\D^{(m)} = I_\D^m$ for all $m \ge 1$. These results indicate that Questions 1 and 2 may have positive answers in general.  \smallskip

In Section 3 we use results of \cite{MT2} to characterize two-dimensional complexes for which $I_\D^{(2)}$ or all $I_\D^{(m)}$ are Cohen-Macaulay. Combining these characterizations with the results of Section 2 we are able to determine all complexes for which $I_\D^m$ is Cohen-Macaulay for some $m \ge 2$. We shall see that there are complexes for which $I_\D^2$ is Cohen-Macaulay but $I_\D^m$ is not Cohen-Macaulay for all $m \ge 3$ and that if $I_\D^m$ is Cohen-Macaulay for some $m \ge 3$, then $I_\D$ is a complete intersection. These results resemble the results for one-dimensional complexes \cite{MT1} and for flag complexes \cite{RTY}. So it is quite natural to ask the following question. 
\smallskip

\noindent {\bf Question 3}. Is $I_\D$ a complete intersection if $I_\D^m$ is Cohen-Macaulay for some $m \ge 3$? 
\smallskip

The arguments of this paper are a mix between algebraic and combinatorial tools.
They may provide techniques for the study of Questions 1, 2 and 3 and related problems in higher dimensional cases.  
\smallskip

\noindent{\em Acknowledgement}. The authors have been informed by G.\ Rinaldo, N.\ Terai and K.\ Yoshida that
they have studied the Cohen-Macaulayness of the second power of Stanley-Reisner ideals by using a method similar to our method of using Ramsey theory and
that they have found some complexes of Theorem 3.7.

\section{Symbolic powers of Stanley-Reisner ideals}

Let $\D$ be a simplicial complex on the vertex set $[n] := \{1,...,n\}$.
Let $R = K[x_1,...,x_n]$ be a polynomial ring over a field $K$. 
The {\it Stanley-Reisner ideal} of $\D$ is the ideal 
$$I_\D = \big(x_{i_1}\cdots x_{i_s}|\ \{i_1,...,i_s\} \notin \D\big).$$
We will always assume that every vertex appears in $\D$. This means that $I_\D$ is non-degenerate ($I_\D$ doesn't contain linear forms).  \smallskip

For every $F \subset [n]$ let $P_F$ denote the ideal of $R$ generated by the variables $x_i, i \notin F$. Then we have the following decomposition
$$I_\D = \bigcap_{F \in \F(\D)} P_F,$$
where $\F(\D)$ is the set of the facets of $\D$. Since the $m$-th symbolic power $I_\D^{(m)}$ is defined as the intersection of the primary components of $I_\D^m$ associated to the minimal primes of $I_\D$, we have
$$I_\D^{(m)} = \bigcap_{F \in \F(\D)} P_F^m.$$

Let $\D_c$ be the simplicial complex generated by the complements of the facets of $\D$ in $[n]$.
We call a non-negative integral vector $\a = (a_1,...,a_n)$  an {\it $m$-cover} of $\D_c$ if $\sum_{i \in G}a_i \ge m$ for every facet $G$ of $\D_c$.  
Let $x^\a = x_1^{a_1}\cdots x_n^{a_n}$.  The ideal $I_\D^{(m)}$ can be described in terms of $\D_c$ as follows \cite[Section 4]{HHT}. \smallskip

\begin{Lemma}\label{cover 1}
 $x^\a \in I_\D^{(m)}$ if and only if $\a$ is an $m$-cover of $\D_c$. 
\end{Lemma}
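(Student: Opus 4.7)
The plan is to unwind both sides of the equivalence using the displayed decomposition $I_\D^{(m)} = \bigcap_{F \in \F(\D)} P_F^m$ together with the elementary description of a power of a monomial prime. Specifically, I would show that $x^\a \in P_F^m$ if and only if $\sum_{i \notin F} a_i \ge m$, and then match the family of inequalities, indexed by $F \in \F(\D)$, with the covering conditions, indexed by the facets of $\D_c$.

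First I would establish the key membership fact. Since $P_F = (x_i : i \notin F)$ is a monomial ideal, so is $P_F^m$, and its monomial generators are the products $x_{i_1}\cdots x_{i_m}$ with $i_1,\dots,i_m \notin F$ (repetitions allowed). Because a monomial lies in a monomial ideal precisely when it is divisible by one of the generators, $x^\a \in P_F^m$ iff one can pick $m$ indices (with multiplicity) from $[n]\setminus F$ that are available inside $\a$, which is exactly the condition $\sum_{i \notin F} a_i \ge m$. Intersecting over $F \in \F(\D)$ then gives: $x^\a \in I_\D^{(m)}$ iff $\sum_{i \notin F} a_i \ge m$ for every $F \in \F(\D)$.

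Next I would rephrase this in terms of $\D_c$. The generators of $\D_c$ are the sets $[n]\setminus F$ with $F \in \F(\D)$. Since the facets of $\D$ are pairwise incomparable, their complements are also pairwise incomparable, so the generators $[n]\setminus F$ are themselves the facets of $\D_c$, giving a bijection $F \leftrightarrow G := [n]\setminus F$ between $\F(\D)$ and $\F(\D_c)$. Under this bijection the inequalities $\sum_{i \notin F} a_i \ge m$ are literally the conditions $\sum_{i \in G} a_i \ge m$ defining an $m$-cover of $\D_c$, which closes the equivalence.

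There is no real obstacle here; the content is entirely bookkeeping. The only point that deserves an explicit sentence is the bijection between $\F(\D)$ and $\F(\D_c)$ via complementation, since $\D_c$ is only defined as the complex generated by the complements and one needs to observe that these generators are already pairwise incomparable and hence maximal in $\D_c$.
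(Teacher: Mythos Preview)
Your argument is correct. The paper does not actually give a proof of this lemma; it merely cites \cite[Section~4]{HHT} for it. Your direct verification---reducing membership in $P_F^m$ to the inequality $\sum_{i\notin F}a_i\ge m$ via the monomial generators of $P_F^m$, and then matching these inequalities with the $m$-cover conditions through the bijection $F\leftrightarrow [n]\setminus F$ between $\F(\D)$ and $\F(\D_c)$---is precisely the standard unpacking of the definitions and is exactly what the cited reference does. Your remark that the complements of facets of $\D$ are pairwise incomparable (hence are indeed the facets of $\D_c$) is the only point requiring a word of justification, and you have supplied it.
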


Note that $x^\a$ is a squarefree monomial if $\a_i = 0,1$ for all $i = 1,...,n$.
In this case, we may consider $\a$ or $x^\a$ as the set $\{i \in [n]|\ a_i = 1\}$. 
Conversely, we can associate every subset $H \subseteq [n]$ with its incidence vector  
whose $i$-th coordinate equals 1 if $i \in H$ and 0 if $i\not\in H$.  
For this reason we also call $H$ an $m$-cover of $\D_c$ if its incidence vector is an $m$-cover of $\D_c$. \smallskip

It is obvious that $H$ is an $m$-cover of $\D_c$ if and only if $|H \cap G| \ge m$ for every facet $G$ of $\D_c$. This is equivalent to the condition that $H$ contains at least $m$ vertices outside every facet of $\D$. In particular, $H$ is an 1-cover of $\D_c$ if and only if $H$ is not contained in any facet of $\D$. Such a set $H$ is called a {\it nonface} of $\D$. \smallskip

As a consequence, $I_\D$ is generated by the monomials of the 1-covers of $\D_c$.
From this it follows that the monomials of $I_\D^m$ correspond to the sums of $m$ 1-covers of $\D_c$.

\begin{Corollary}\label{cover 2}
$I_\D^{(m)} = I_\D^m$ if and only if every $m$-cover of $\D_c$ is the sum of $m$ 1-covers.
\end{Corollary}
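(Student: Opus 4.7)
The plan is to compare the monomial content of the two ideals directly. Since $I_\D^{(m)}$ and $I_\D^m$ are both monomial ideals, equality holds if and only if they contain the same monomials, i.e.\ the same exponent vectors. Lemma~\ref{cover 1} tells us that the exponent vectors $\a$ with $x^\a \in I_\D^{(m)}$ are exactly the $m$-covers of $\D_c$, so everything reduces to identifying the exponent vectors $\a$ with $x^\a \in I_\D^m$.

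For the second description, I would use the remark immediately preceding the statement: $I_\D$ is generated by the monomials $x^\b$ with $\b$ a $1$-cover of $\D_c$. Hence a monomial $x^\a$ lies in $I_\D^m$ precisely when it is divisible by a product $x^{\b_1}\cdots x^{\b_m}$ for some $1$-covers $\b_1,\dots,\b_m$, equivalently when $\a \ge \b_1+\cdots+\b_m$ componentwise. The bridge from ``$\ge$ a sum of $m$ 1-covers'' to ``$=$ a sum of $m$ 1-covers'' is the elementary observation that the set of 1-covers of $\D_c$ is closed under adding any non-negative integral vector: if $\b$ is a $1$-cover then for any $\c \ge \0$, $\b+\c$ is still a $1$-cover, because $\sum_{i\in G}(b_i+c_i) \ge \sum_{i\in G}b_i \ge 1$ for every facet $G$ of $\D_c$. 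So any slack $\c = \a - (\b_1+\cdots+\b_m)$ can be absorbed into, say, $\b_1$, showing that $x^\a \in I_\D^m$ iff $\a$ is literally the sum of $m$ 1-covers.

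Putting these two characterizations together, the inclusion $I_\D^m \subseteq I_\D^{(m)}$ (which always holds) translates into the statement that every sum of $m$ $1$-covers is an $m$-cover, and the reverse inclusion $I_\D^{(m)} \subseteq I_\D^m$ translates exactly into the condition that every $m$-cover of $\D_c$ is a sum of $m$ $1$-covers. This gives the desired equivalence.

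There is essentially no obstacle here; the only subtle point is the ``absorbing the slack'' step, which depends on monotonicity of the cover condition under adding non-negative vectors. Everything else is bookkeeping via Lemma~\ref{cover 1} and the generator description of $I_\D$.
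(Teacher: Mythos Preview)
Your argument is correct and follows the same line as the paper. The paper treats the corollary as an immediate consequence of Lemma~\ref{cover 1} together with the sentence preceding the statement (``the monomials of $I_\D^m$ correspond to the sums of $m$ 1-covers of $\D_c$''); you simply spell out the one detail the paper leaves implicit, namely the ``absorbing the slack'' step showing that $x^\a \in I_\D^m$ is equivalent to $\a$ being \emph{equal} to a sum of $m$ 1-covers rather than merely bounded below by one.
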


We will use Lemma \ref{cover 1} and Corollary  \ref{cover 2} freely without referring to them. \smallskip

If $\D$ is pure and $\dim \D = n-3$, $\D_c$ is a simple graph. In this case, we know that $I_\D^{(m)} = I_\D^m$ for all $m \ge 1$ if and only if $\D_c$ is bipartite  \cite[Theorem 5.1]{HHT} (the sufficient part was proved in \cite[Corollary 2.6]{GRV}). We will improve this result as follows.

\begin{Proposition}\label{graph} 
Let $\D$ be a pure simplicial complex with $\dim \D = n-3$. Then $I_\D^{(m)}=I_\D^m$ for some $m \geq 2$ (or all $m \ge 2$) if and only if $\D_c$ is a bipartite graph. 
\end{Proposition}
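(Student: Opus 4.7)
The reverse implication—$\D_c$ bipartite forcing $I_\D^{(m)} = I_\D^m$ for all $m \ge 1$—is \cite[Theorem 5.1]{HHT}, so the content lies in the forward direction: if $I_\D^{(m)} = I_\D^m$ for some $m \ge 2$, then $\D_c$ is bipartite. I will argue the contrapositive. Assuming $\D_c$ contains an odd cycle, I will exhibit, for each $m \ge 2$, an explicit $m$-cover of $\D_c$ that cannot be written as a sum of $m$ 1-covers; by Corollary \ref{cover 2} this gives $I_\D^{(m)} \neq I_\D^m$.

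Choose a shortest odd cycle $C = v_1 v_2 \cdots v_{2k+1}$ in $\D_c$; minimality forces $C$ to be chordless in $\D_c$. Define the integer vector $\a$ by $a_w = m$ for $w \notin V(C)$ and, on the cycle, by $a_{v_i} = m/2$ when $m$ is even, or $a_{v_i} = (m+1)/2$ for $i$ odd and $a_{v_i} = (m-1)/2$ for $i$ even when $m$ is odd. A case check shows $\a$ is an $m$-cover: each cycle edge has its two assigned values summing to at least $m$ (with only $v_{2k+1}v_1$ summing to $m+1$ in the odd case), every other edge of $\D_c$ has an endpoint outside $V(C)$ contributing $m$, and the chordlessness of $C$ rules out any other edges of $\D_c$ inside $V(C)$.

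Now suppose $\a = \b_1 + \cdots + \b_m$ with each $\b_j$ a 1-cover of $\D_c$. The support $\{v : (\b_j)_v \ge 1\}$ of each $\b_j$ is a vertex cover of $\D_c$, so it intersects $V(C)$ in a vertex cover of $C_{2k+1}$, which has cardinality at least $k+1$. Hence $\sum_{v \in V(C)} (\b_j)_v \ge k+1$, and summing over $j$ yields $\sum_{v \in V(C)} a_v \ge m(k+1)$. But our construction satisfies $\sum_{v \in V(C)} a_v = km + \lceil m/2 \rceil$, which is strictly less than $m(k+1)$ for every $m \ge 2$. This contradiction completes the proof.

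The main hurdle is the odd $m$ case: the uniform fractional choice $a_{v_i} = m/2$ is no longer integral, and the alternating substitute $(m \pm 1)/2$ could yield an edge of total value $m-1$ if two ``low'' vertices happened to be joined by a chord of $C$. Restricting to a chordless odd cycle—always available inside any non-bipartite graph—sidesteps this issue cleanly, and is the only subtle point in the construction.
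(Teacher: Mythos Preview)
Your proof is correct and follows the same strategy as the paper: pick an induced odd cycle in $\D_c$, construct an $m$-cover whose total weight on the cycle vertices is strictly below $m(k+1)$, and derive a contradiction from the fact that any $1$-cover contributes at least $k+1$ on the cycle. The only cosmetic difference is the choice of witness vector---the paper alternates $1$ and $m-1$ on the cycle (with $m-1$ off it), giving cycle-sum $(k+1)m-1$, while you use values near $m/2$ (with $m$ off it), giving cycle-sum $km+\lceil m/2\rceil$---but the counting argument is identical.
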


\begin{proof} 
We only need to show the necessary part. Assume that $I_\D^{(m)}=I_\D^m$ for some $m \geq 2$.
If $\D_c$ is not a bipartite graph, it has an induced odd cycle, say on the vertices $1,...,2r+1$, $r \ge 1$. Assume that $\{1,2\},\{2,3\},...,\{2r,2r+1\},\{2r+1,1\}$ are the edges of this cycle.
For every 1-cover $\c =(c_1,\ldots,c_n)$ of $\D_c$ we have $c_i + c_j \ge 1$ if $\{i,j\}$ is an edge of $ \D_c$. Therefore,
\begin{align}
\sum_{i=1}^{2r+1}c_i &= \frac{1}{2}\big[(c_1+c_2)+\cdots+(c_{2r}+c_{2r+1})+(c_{2r+1}+c_1)\big] \notag  \\
&\geq \left\lceil \frac{1}{2}(2r+1)\right\rceil = r+1, \notag
\end{align}
where $\lceil a \rceil$ denotes the smallest integer $\ge a$. Since there are no edges of $\D_c$ connecting the vertices $1,3,...,2r+1$, the vector
$$\a=(\underbrace{1,m-1,\ldots,1,m-1}_{2r},m-1,\ldots,m-1)$$ 
is an $m$-cover of $\D_c$. Therefore, $\a = \c_1 + \cdots + \c_m$ for some 1-covers $\c_1,\ldots,\c_m$. The sum of the first $2r+1$ coordinates of $\a$ is $r + (r+1)(m-1) < (r+1)m$, whereas the sum of the first $2r+1$ coordinates of $\c_1+\cdots+\c_m$ is $\ge (r+1)m$. So we obtain a contradiction.
\end{proof}

In the following we denote by $\D_1$ the graph of the edges of $\D$.

\begin{Lemma} \label{nontriangle}
Assume that $I_\D^{(m)}=I_\D^m$ for some $m \ge 2$. Then  $\D_1$ has no independent set of size 3.
\end{Lemma}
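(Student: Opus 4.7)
The plan is to prove the contrapositive by constructing, from any independent triple $\{i,j,k\}$ in $\D_1$, an $m$-cover of $\D_c$ that is not a sum of $m$ 1-covers; then Corollary~\ref{cover 2} gives $I_\D^{(m)}\neq I_\D^m$. Independence of $\{i,j,k\}$ in $\D_1$ means each of the three pairs $\{i,j\}$, $\{i,k\}$, $\{j,k\}$ is a nonface of $\D$, which in turn forces every facet of $\D$ to contain at most one of the vertices $i,j,k$. Equivalently, every facet misses at least two of them, and this is the combinatorial fact that drives everything.

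The cover I would try is the asymmetric vector $\a$ with $a_i=a_j=m-1$, $a_k=1$, and $a_l=0$ for all other $l$. To verify $\a$ is an $m$-cover I would do a short case analysis on which vertex of $\{i,j,k\}$ lies in a given facet $F$. The tight case is when $i$ (or symmetrically $j$) is in $F$, giving $\sum_{l\notin F}a_l=a_j+a_k=m$; when $k\in F$ the sum is $2(m-1)\ge m$ for $m\ge 2$, and when $F$ omits all of $i,j,k$ the sum is $2m-1\ge m$.

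For non-decomposability, suppose $\a=\c_1+\cdots+\c_m$ with each $\c_t$ a 1-cover. The key step is the general observation that the support of any 1-cover must be a nonface of $\D$; otherwise that support would lie inside some facet $F$ and the inequality $\sum_{l\notin F}c_{t,l}\ge 1$ would fail. Now, since $a_l=0$ outside $\{i,j,k\}$, each $\c_t$ is supported on $\{i,j,k\}$, and because every singleton is a face by the standing hypothesis that every vertex appears in $\D$, each such support has size at least two. Hence $c_{t,i}+c_{t,j}+c_{t,k}\ge 2$ for every $t$, so summing over $t$ yields $a_i+a_j+a_k\ge 2m$, contradicting $a_i+a_j+a_k=2m-1$.

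The main obstacle is hitting on the correct values of $(a_i,a_j,a_k)$. A symmetric choice $(r,r,r)$ will not do: for $m=3$, the cover condition $2r\ge m$ forces $3r\ge 6=2m$, and in fact $(2,2,2)$ admits the honest decomposition $(1,1,0)+(1,0,1)+(0,1,1)$. Asymmetrizing to reach total $2m-1$ on $\{i,j,k\}$ is what creates the single-unit deficit relative to the lower bound $2m$ coming from the 1-cover constraints.
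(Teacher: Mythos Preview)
Your proof is correct and follows essentially the same approach as the paper: the witness vector $\a=(m-1,m-1,1,0,\dots,0)$ corresponds exactly to the paper's monomial $x_1^{m-1}x_2^{m-1}x_3$, and your coordinate-sum count $\sum_t(c_{t,i}+c_{t,j}+c_{t,k})\ge 2m>2m-1$ is the cover-language version of the paper's degree argument that $\deg f=2m-1$ is below the minimal degree $2m$ of $I_\D^m$. The only cosmetic difference is that the paper verifies $x^\a\in I_\D^{(m)}$ by writing $x_1^{m-1}x_2^{m-1}x_3=(x_1x_2)^{m-2}(x_1x_2x_3)\in I_\D^{m-2}I_\D^{(2)}\subseteq I_\D^{(m)}$ rather than your direct case analysis on facets.
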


\begin{proof}
If $\D_1$ has an independent set of size 3, say $\{1,2,3\},$ then $\{1,2\},\{1,3\},\{2,3\} \not\in \D.$ Therefore, every facet $F$ of $\D$ doesn't contains at least two vertices in $\{1,2,3\}$. This implies $x_1x_2x_3 \in I_\D^{(2)}$  so that
$$x_1^{m-1}x_2^{m-1}x_3=(x_1x_2)^{m-2}(x_1x_2x_3) \in I_\D^{m-2}I_\D^{(2)}\subseteq \ I_\D^{(m)}.$$
But $x_1^{m-1}x_2^{m-1}x_3 \notin I_\D^m$ because it has degree $2m-1,$ whereas the minimal degree of the elements of $I_\D^m$ is $2m,$ a contradiction.
\end{proof}

\begin{Lemma}\label{join face}
Let $\D$ be a pure simplicial complex. Assume that $I_\D^{(m)}=I_\D^m$ for some $m \ge 2$. Then every face $F$ of $\D$ with $\dim F = \dim \D-1$ is contained in at most 2 facets of $\D$.
\end{Lemma}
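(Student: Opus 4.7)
Argue by contradiction: suppose the codimension-one face $F$ is contained in at least three facets of $\D$. Since $\D$ is pure of dimension $d$ and $\dim F = d-1$, these facets can be written as $F \cup \{a\}$, $F \cup \{b\}$, $F \cup \{c\}$ with $a, b, c$ distinct vertices outside $F$. Write $V = [n] \setminus (F \cup \{a, b, c\})$. The plan is to exhibit a monomial $M \in I_\D^{(m)}$ whose exponent vector admits no expression as a sum of $m$ $1$-covers of $\D_c$, contradicting Corollary~\ref{cover 2}. Concretely, I will take
\[
M = x_F^{m-1}\, x_a^{m-1}\, x_b^{m-1}\, x_c, \qquad \text{where } x_F = \prod_{i \in F} x_i,
\]
whose exponent vector $\a$ has weight $m-1$ on $F \cup \{a, b\}$, weight $1$ on $c$, and $0$ on $V$.

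\textbf{Step 1: $M \in I_\D^{(m)}$.} First, $F \cup \{a,b\}$ has $d+2$ vertices and therefore cannot be a face of $\D$, so $x_F x_a x_b \in I_\D$. Next, the incidence vector of $F \cup \{a,b,c\}$ is a $2$-cover of $\D_c$: one checks $|(F \cup \{a,b,c\}) \setminus G| \ge 2$ for every facet $G$ of $\D$, the only delicate subcase being $G \not\supseteq F$ together with $G \supseteq \{a,b,c\}$, where purity forces $|G \cap F| \le |G| - 3 = d - 2$ and hence $|F \setminus G| \ge 2$. Therefore $x_F x_a x_b x_c \in I_\D^{(2)}$. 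Multiplying by $(x_F x_a x_b)^{m-2} \in I_\D^{m-2}$ and invoking the standard inclusion $I_\D^{m-2} I_\D^{(2)} \subseteq I_\D^{(m)}$ yields $M \in I_\D^{(m)}$.

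\textbf{Step 2: No $1$-cover decomposition.} Under the hypothesis $I_\D^{(m)} = I_\D^m$, Corollary~\ref{cover 2} lets us write $\a = \c_1 + \cdots + \c_m$ with each $\c_k$ a $1$-cover of $\D_c$. Since $\a$ vanishes on $V$ and the $\c_k$ are non-negative, each $\c_k$ also vanishes on $V$. The $1$-cover conditions at the facets $F \cup \{a\},\, F \cup \{b\},\, F \cup \{c\}$—whose complements in $[n]$ are disjoint from $F$—reduce to
\[
c_{k,b} + c_{k,c} \ge 1, \qquad c_{k,a} + c_{k,c} \ge 1, \qquad c_{k,a} + c_{k,b} \ge 1.
\]
Adding these and using integrality yields $c_{k,a} + c_{k,b} + c_{k,c} \ge 2$ for every $k$, and hence $\sum_{k=1}^{m}(c_{k,a} + c_{k,b} + c_{k,c}) \ge 2m$. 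But this sum equals $\a_a + \a_b + \a_c = (m-1) + (m-1) + 1 = 2m - 1$, a contradiction.

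\textbf{Main obstacle.} The nontrivial point is the choice of exponents in $M$. The three $m$-cover inequalities at $F\cup\{a\},F\cup\{b\},F\cup\{c\}$ already force $\a_a + \a_b + \a_c \ge \lceil 3m/2 \rceil$, while the contradiction in Step~2 demands $\a_a + \a_b + \a_c < 2m$; for $m \ge 3$ no \emph{symmetric} weighting of $\{a,b,c\}$ fits into this narrow window, so one is compelled to break the symmetry, and the asymmetric exponent $(m-1, m-1, 1)$ is what makes the argument work uniformly for all $m \ge 2$. A purely degree-based argument along the lines of Lemma~\ref{nontriangle} does not suffice here, since the hypothesis provides no lower bound on the minimal degree of $I_\D$.
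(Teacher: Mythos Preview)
Your proof is correct and follows essentially the same approach as the paper: the test monomial $M = x_F^{m-1}x_a^{m-1}x_b^{m-1}x_c$ is exactly the paper's $f = x_1x_2^{m-1}x_3^{m-1}\prod_{i\in F}x_i^{m-1}$ up to relabeling, and both contradictions amount to showing that every factor (equivalently, every $1$-cover supported on $F\cup\{a,b,c\}$) must carry weight at least $2$ on $\{a,b,c\}$, forcing total weight $\ge 2m$ against the actual value $2m-1$. Your Step~1, factoring through $I_\D^{m-2}\cdot I_\D^{(2)}$, and your Step~2, phrased via the three cover inequalities at $F\cup\{a\},F\cup\{b\},F\cup\{c\}$, are slightly more explicit than the paper's ``we can easily check'' and its divisibility argument, but the underlying idea is identical.
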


\begin{proof}
Assume to the contrary that there is a face $F$ of $\D$ with $\dim F = \dim \D-1$ which is contained in 3 facets of $\D$, say $\{1\} \cup F$, $\{2\} \cup F$, $\{3\} \cup F$. Put $f = x_1x_2^{m-1}x_3^{m-1}\prod_{i \in F}x_i^{m-1}$ and $V = \{1,2,3\} \cup F$. Since every facet of $\D$ doesn't contain at least two vertices of $V$, we can easily check that $f \in I_\D^{(m)}$. Therefore, $f \in I_\D^m.$ Since every nonface of $\D$ in $V$ must contain at least two vertices of $\{1,2,3\}$, every monomial of $I_\D$ in the variables $x_i$, $i \in  V$, must be divisible by $x_1x_2$ or $x_1x_3$ or $x_2x_3.$ Therefore, the divisor of every monomial of $I_\D^m$ in $x_1,x_2,x_3$  has degree at least $2m.$ But the divisor of $f$ in $x_1,x_2,x_3$ is the monomial $x_1x_2^{m-1}x_3^{m-1}$, which has degree $2m-1$. So we obtain a contradiction. 
\end{proof}

In the following we denote by $K^t_r$ ($t \le r$) the simplicial complex of all $t$-subsets of a simplex of $r$ vertices. Note that $K^2_r$ is the complete graph $K_r$.

\begin{Proposition}\label{complete}
Let $\D$ be a pure simplicial complex and $d = \dim \D+1$.\par
{\rm (i)} If $I_\D^{(2)}=I_\D^2$, then $\D$ doesn't contain any $K^{\lfloor d+2/2\rfloor}_{d+2}$.\par
{\rm (ii)} If $I_\D^{(m)}=I_\D^m$ for some $m \ge 3$ and $n \ge d+3$, then $\D$ doesn't contain any $K^{d-1}_{d+1}$.
\end{Proposition}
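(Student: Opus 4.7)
For part (i), the plan is to test the incidence vector $\1_V$ of the putative $K^k_{d+2}$ (with $k=\lfloor(d+2)/2\rfloor$) against the hypothesis. Since every facet $F$ of $\D$ has $|F|=d$ while $|V|=d+2$, one has $|V\setminus F|\ge 2$, so $\1_V$ is a 2-cover of $\D_c$. The hypothesis $I_\D^{(2)}=I_\D^2$ together with Corollary \ref{cover 2} gives a decomposition $\1_V=\c_1+\c_2$ into 1-covers, and since the sum is $0/1$ each $\c_i$ must be the incidence vector of a subset $A_i\subseteq V$ with $A_1\sqcup A_2=V$. Pigeonhole forces $|A_1|\le k$, but $A_1$ is simultaneously a nonface of $\D$ (as a 1-cover) and a $\le k$-subset of $V$, hence a face of $K^k_{d+2}\subseteq\D$. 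This contradiction proves (i).

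For part (ii), the plan is analogous but with the more delicate $m$-cover $\a$ defined by $a_v=m-1$ for $v\in V$, $a_{u_1}=1$ for a chosen $u_1\in[n]\setminus V$ (available thanks to $n\ge d+3$), and $0$ elsewhere. A direct check shows $\a$ is an $m$-cover: for facets $F\subseteq V$ the relevant sum is $(m-1)+1=m$, while for other facets $|V\setminus F|\ge 2$ gives a sum $\ge 2(m-1)\ge m$. Writing $f=x^\a$, the hypothesis $I_\D^{(m)}=I_\D^m$ produces a factorization $f=f_1\cdots f_m$ with $f_i\in I_\D$; exactly one $f_i$, say $f_1$, contains $x_{u_1}$, while the others are supported on $V$ and, being divisible by nonfaces contained in $V$, have degree at least $d$ (since every nonface in $V$ has size $\ge d$ by the $K^{d-1}_{d+1}$ assumption). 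Degree arithmetic then forces $\deg f_1\le (d+1)(m-1)+1-(m-1)d=m$.

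The bound $\deg f_1\le m$ is not contradictory on its own, and this is the main obstacle. To close the argument I would split on the minimal nonface $N$ dividing $f_1$: if $N\subseteq V$, then $|N|\ge d$ forces $\deg f_1\ge d+1$ and hence $m\ge d+1$; if instead $u_1\in N$, the proper subset $N\setminus\{u_1\}\subseteq V$ must be a face (by minimality together with $K^{d-1}_{d+1}\subseteq\D$), and its size is bounded by $m-1$. Neither branch is immediately impossible, so I would invoke the second extra vertex $u_2\in[n]\setminus V$---either by repeating the argument with $\a$ replaced by $(m-1)\1_V+e_{u_2}$, or by enriching to $\a'=(m-1)\1_V+e_{u_1}+e_{u_2}$ and refactoring $x^{\a'}$---and combine with Lemma \ref{nontriangle} and Lemma \ref{join face} to constrain the edges and $(d-2)$-faces between $V$ and $\{u_1,u_2\}$. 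The hypotheses $m\ge 3$ and $n\ge d+3$ both enter precisely here, and the cleanest resolution likely tailors the choice of $m$-cover so that a single factorization argument rules out every remaining configuration.
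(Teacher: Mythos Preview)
Your argument for part (i) is correct and is exactly the paper's proof, phrased in the language of covers.

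For part (ii) there is a genuine gap, and you have correctly identified it yourself: the cover $\a=(m-1)\1_V+e_{u_1}$ does not lead to a contradiction. The degree count only yields $\deg f_1\le m$, and neither of your two branches closes for general $m\ge 3$. The hand-wave at the end (bringing in $u_2$, invoking Lemma \ref{nontriangle} and Lemma \ref{join face}) is not a proof, and in fact those lemmas are not used at all in the paper's argument for this proposition.

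The paper's resolution is precisely the ``tailoring'' you allude to, and it hinges on two changes you did not make. First, use exponent $m-2$ rather than $m-1$, and bring in \emph{both} extra vertices $d+2,d+3$ from the start (this is exactly why the hypothesis is $n\ge d+3$, not $n\ge d+2$). Second, split on whether $V=[d+1]$ contains a $d$-element nonface. If it does, say $[d]$, test with $g=x_1^{m-2}\cdots x_d^{m-2}x_{d+1}x_{d+2}x_{d+3}$: this is an $m$-cover, and in any factorisation $g=f_1\cdots f_m$ at least $m-2$ of the $f_i$ are supported on $[d+1]$ and hence have degree $\ge d$, while the remaining two factors have degree $\ge 2$; thus $\deg g\ge d(m-2)+4$, contradicting $\deg g=d(m-2)+3$. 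If instead every $d$-subset of $V$ is a facet, test with $h=x_1^{m-2}\cdots x_{d+1}^{m-2}x_{d+2}x_{d+3}$: now the $m-2$ factors in $V$ have degree $\ge d+1$, giving $\deg h\ge (d+1)(m-2)+4$, contradicting $\deg h=(d+1)(m-2)+2$. The drop from $m-1$ to $m-2$ is what makes the total degree land one or two short of the lower bound; your choice of $m-1$ leaves too much slack.
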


\begin{proof} 
(i) Consider an arbitrary set of $d+2$ vertices, say $[d+2]$.
Set $f =x_1\cdots x_{d+2}$. Since every facet of $\D$ doesn't contain at least two vertices of $[d+2]$, $f \in I_\D^{(2)}$. Hence $f \in I_\D^2$.
From this it follows that the set $[d+2]$ can be divided into two nonfaces of $\D$.
One of these nonfaces must have cardinality $\le \lfloor d+2/2\rfloor$. Hence 
$\D$ doesn't contain $K^{\lfloor d+2/2\rfloor}_{d+2}$.\par
(ii) Assume for the contrary that $\D$ contains a $K^{d-1}_{d+1}$, say on the vertex set $[d+1]$. 
If there exists a nonface of $d$ vertices in $[d+1]$, say $[d]$, we consider the monomial $g =x_1^{m-2}\cdots x_d^{m-2}x_{d+1}x_{d+2}x_{d+3}$. Using the fact that every facet of $\D$ doesn't contain at least a vertex in $[d]$ and at least three vertices in $[d+3]$, we can check that $g \in I_\D^{(m)}$. Hence $g \in I_\D^m$. So $g$ is the product of $m$ monomials in $I_\D$.  At least $m-2$ of these monomials involve only the variables $x_1,...,x_{d+1}$. Since all subsets $F \subset [d+1]$ with $|F| \le d-1$ are faces of $\D$, these monomials have degree $\ge d$. From this it follows that $\deg g \ge d(m-2)+4$. Since $\deg g = d(m-2) + 3$, we obtain a contradiction. Thus, every $d$-set of $[d+1]$ is a facet of $\D$. Set $h = x_1^{m-2} \cdots x_{d+1}^{m-2}x_{d+2}x_{d+3}$. Using the fact that every facet of $\D$ doesn't contain at least three vertices of $[d+3]$, we can easily check that $h \in I_\D^{(m)}$. Hence $h \in I_\D^m$. So $h$ is the product of $m$ monomials in $I_\D$. At least $m-2$ of these monomials involve only the variables $x_1,...,x_{d+1}$. Since all subsets $F \subset [d+1]$ with $|F| \le d$ are faces of $\D$, these monomials have degree $\ge d+1$. From this it follows that $\deg h \ge (d+1)(m-2) + 4$. Since $\deg h = (d+1)(m-2) + 2$, we obtain a contradiction.
\end{proof}

Applying Proposition \ref{complete} to the case $\dim \D = 2$,  we see that the condition $I_\D^{(m)} = I_\D^m$ for some $m \ge 2$ implies that the graph $\D_1$ does not contain any complete subgraph $K_5$ if $m = 2$ or $K_4$ if $m > 2$ and $n \ge 6$.
Note that a complete subgraph is also called a clique. Together with Lemma \ref{nontriangle}, this leads us to the notion of Ramsey graphs. \smallskip

Recall that a {\it Ramsey $(s,t)$-graph} is a graph with no clique of size $s$ and no independent set of size $t$. Ramsey's theorem \cite{Ram} tells us that there are only a finite number of Ramsey $(s,t)$-graphs for each $s$ and $t$ (see \cite{Ra} for a survey on the largest number of vertices of a Ramsey $(s,t)$-graph). 

\begin{Corollary} \label{Ramsey}
Let $\D$ be a pure two-dimensional simplicial complex. \par
{\rm (i)} If $I_\D^{(2)}=I_\D^2$, then $\D_1$ is a Ramsey $(5,3)$-graph.\par
{\rm (ii)} If $I_\D^{(m)}=I_\D^m$ for some $m \geq 3$ and $n \ge 6$, then $\D_1$ is a Ramsey $(4,3)$-graph.
\end{Corollary}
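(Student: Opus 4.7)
The plan is to observe that this corollary is essentially a direct specialization of the preceding two results (Lemma \ref{nontriangle} and Proposition \ref{complete}) to the case $\dim\D = 2$, i.e., $d = 3$. The Ramsey condition has two halves: (a) no independent set of size $3$, and (b) no clique of size $5$ (for part (i)) or $4$ (for part (ii)). The independent-set condition comes immediately from Lemma \ref{nontriangle}, which applies for any $m\ge 2$ with no restriction on $\dim\D$. So the only content to extract is the clique condition from Proposition \ref{complete}.

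For part (i), I would plug $d = 3$ into Proposition \ref{complete}(i) to get $\lfloor (d+2)/2\rfloor = 2$ and $d+2 = 5$, so $\D$ contains no $K^2_5$. Since $K^2_r$ is by definition the complete graph $K_r$ on $r$ vertices, this says $\D_1$ has no clique of size $5$. Combined with Lemma \ref{nontriangle}, $\D_1$ is a Ramsey $(5,3)$-graph.

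For part (ii), I would plug $d = 3$ into Proposition \ref{complete}(ii), noting that the hypothesis $n \ge d+3 = 6$ is exactly what is assumed in the corollary. This gives that $\D$ contains no $K^{d-1}_{d+1} = K^2_4 = K_4$, so $\D_1$ has no clique of size $4$. Again Lemma \ref{nontriangle} rules out independent sets of size $3$, and we conclude that $\D_1$ is a Ramsey $(4,3)$-graph.

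There is no real obstacle; the only thing to verify carefully is the translation between the $K^t_r$ notation used in Proposition \ref{complete} and the clique notation used in the Ramsey terminology, namely that ``$\D$ does not contain $K^2_r$'' is the same as ``$\D_1$ has no clique of size $r$.'' This is immediate from the definition of $K^2_r$ as the $2$-skeleton (i.e., the full set of edges) of a simplex on $r$ vertices, which coincides with the complete graph $K_r$ as mentioned in the paragraph preceding Proposition \ref{complete}.
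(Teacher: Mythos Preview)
Your proposal is correct and matches the paper's own reasoning exactly: the paper states the corollary immediately after the paragraph explaining that applying Proposition~\ref{complete} with $d=3$ rules out $K_5$ (for $m=2$) or $K_4$ (for $m\ge 3$, $n\ge 6$), and combining this with Lemma~\ref{nontriangle} yields the Ramsey conditions. The only minor point is that the translation step you spell out (that ``$\D$ contains no $K^2_r$'' is equivalent to ``$\D_1$ has no $K_r$'') is left implicit in the paper, but this is indeed immediate from the definitions.
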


It is known that $n \le 13$ if $\D_1$ is a Ramsey $(5,3)$-graph and that $n \le 8$ if $\D_1$ is a Ramsey $(4,3)$-graph \cite{GG}. From this it follows that there are only a finite number of two-dimensional complexes with $I_\D^{(m)}=I_\D^m$ for some $m \geq 2$.
The same phenomenon also holds in the case $\dim \D = 1$ \cite{MT1}. Therefore, it is quite natural to ask whether there is a bound on $n$ if $I_\D^{(m)}=I_\D^m$ for some $m \geq 2$ in the case $\dim \D > 2$. There is the following partial answer to this problem. 

\begin{Proposition}\label{finite}
Let $\D$ be a pure simplicial complex and $d = \dim \D+1$. If $I_\D^{(m)}=I_\D^m$ for some $m \geq d+1$, then $n \le 2d$. 
\end{Proposition}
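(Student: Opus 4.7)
The plan is to derive a contradiction from the assumption $n\ge 2d+1$ by exhibiting an explicit monomial in $I_\D^{(m)}\setminus I_\D^m$.

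First I would choose any subset $V\subseteq[n]$ of size $2d+1$ and record that $\1_V$ is a $(d+1)$-cover of $\D_c$, which follows from $|V\cap F|\le|F|=d$ on each facet. In parallel I would pick any minimal nonface $W$ of $\D$ and set $\delta:=|W|\ge 2$; such a $W$ exists since singletons are faces of $\D$ but every $(d+1)$-subset of $[n]$ is a nonface, and $\1_W$ is a $1$-cover because $W$ is contained in no facet.

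The heart of the argument is the choice of cover $\a:=\1_V+(m-d-1)\1_W$, which is a nonnegative integer vector because $m\ge d+1$. Summing the contributions from $\1_V$ and $\1_W$ on each facet shows that $\a$ is an $m$-cover, so the monomial $f:=x^\a$ belongs to $I_\D^{(m)}$ by Lemma \ref{cover 1}. A direct calculation gives
\[
\deg f = (2d+1) + (m-d-1)\delta = m\delta - \bigl(\delta(d+1)-2d-1\bigr),
\]
and the elementary inequality $\delta(d+1)-2d-1 \ge 2(d+1)-2d-1 = 1$ (using $\delta\ge 2$) yields $\deg f \le m\delta - 1$. Since every monomial of $I_\D^m$ has degree at least $m\delta$, this forces $f\notin I_\D^m$, contradicting $I_\D^{(m)} = I_\D^m$.

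The main obstacle is hitting on the right ansatz for $\a$: one has to split the cover requirement as $(d+1) + (m-d-1)$, letting the cheap $(d+1)$-cover $\1_V$ (of total weight $2d+1$) absorb as much as possible and then topping up with $m-d-1$ copies of the smallest available $1$-cover $\1_W$. Once this calibration is in place, the degree gap drops out of the single structural fact $\delta\ge 2$ and needs no further information about $\D$. I expect the construction itself (rather than any of the inequalities) to be the step that takes the most experimentation; alternative natural candidates, such as scaling $\1_V$ alone or taking $f = \prod_{i\in V}x_i^{m-d}$, leave the degree above $m\delta$ as soon as $m$ is sufficiently larger than $d+1$.
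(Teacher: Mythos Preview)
Your argument is essentially the paper's own: the paper takes a minimal nonface $\{1,\dots,r\}$ of minimum size $r$, sets $V=[2d+1]$, and forms $f=(x_1\cdots x_r)^{m-d}x_{r+1}\cdots x_{2d+1}$, which in your cover notation is exactly $\a=\1_V+(m-d-1)\1_W$ with $W\subseteq V$; the degree count and the contradiction are identical.

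There is one point to tighten. You write ``pick any minimal nonface $W$'' and then invoke ``every monomial of $I_\D^m$ has degree at least $m\delta$''. The latter statement is true only when $\delta$ equals the \emph{minimum} degree of a generator of $I_\D$, i.e.\ when $W$ is a minimal nonface of smallest cardinality. If you happened to choose a minimal nonface with $\delta$ strictly larger than this minimum $r$, then $I_\D^m$ would contain monomials of degree $mr<m\delta$, and your inequality $\deg f\le m\delta-1$ would no longer force $f\notin I_\D^m$; indeed for $\delta>r$ the needed bound $2d+1+(m-d-1)\delta<mr$ fails once $m$ is large. The fix is immediate: replace ``any minimal nonface'' by ``a minimal nonface of minimum cardinality'' (this is exactly the paper's choice of $r$), and the rest of your proof goes through unchanged.
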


\begin{proof}
Assume for the contrary that $n \ge 2d+1$. Let $r$ be the minimal degree of the generators of $I_\D$.
Then there is a nonface of $\D$ of size $r$, say $\{1,...,r\}$.
Let $f = (x_1\cdots x_r)^{m-d}x_{r+1}\cdots x_{2d+1}$. 
Since the complement of every facet of $\D$ contains at least a vertex in $\{1,...,r\}$ and $d+1$ vertices in $[2d+1]$, we can easily check that $f \in I_\D^{(m)}$, which implies $f \in I_\D^m$. From this it follows that $\deg f \ge rm$. So we get the inequality
$r(m-d) + (2d+1-r) \ge rm$, which implies $r(d+1) \le 2d+1$, a contradiction because $r \ge 2$.
\end{proof}

The bound of Proposition \ref{finite} is the best possible.

\begin{Example}
{\rm Let $\D$ be the simplicial complex on $2d$ vertices with two facets $\{1,...,d\}$ and $\{d+1,...,2d\}$. Then $I_\D$ is generated by the monomials $x_ix_j$, $i = 1,...,d,\ j = d+1,...,2d$. These monomials correspond to the edges of a bipartite graph. Hence $I_\D^{(m)}=I_\D^m$ for all $m \ge 2$ by \cite[Theorem 5.9]{SVV}.}
\end{Example}

We now prepare some properties of Ramsey (4,3)-graphs which we shall need later in our investigation. For a graph $\G$ we denote by $\overline \G$ the graph of the nonedges of $\G$.
Note that an independent set of $\G$ of size $t$ is just a complete subgraph $K_t$ of $\overline \G$.

\begin{Proposition} \label{Ramsey properties}
Let $\G$ be a Ramsey (4,3)-graph on $n$ vertices.\par
\par {\rm (i) } If $n =7$, $\overline \G$ has an induced cycle of length 5 or 7.
\par {\rm (ii) } If $n=8$, $\overline \G$ has an induced cycle of length 5.
\end{Proposition}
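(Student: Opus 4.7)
The plan is to pass to the complement $H := \overline \G$. The Ramsey $(4,3)$ hypothesis translates directly to the statement that $H$ is triangle-free (a $K_3$ in $H$ is an independent $3$-set in $\G$) and has independence number $\alpha(H) \le 3$ (an independent $4$-set in $H$ is a $K_4$ in $\G$). So the task becomes: produce an induced $5$-cycle, and in case (i) possibly an induced $7$-cycle, inside a triangle-free graph on $7$ or $8$ vertices with independence number at most $3$.

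The crucial observation is that a shortest odd cycle $C$ in any triangle-free graph is automatically induced. Indeed, a chord would split $C$ into two shorter cycles whose lengths sum to $|C|+2$, hence of opposite parities; the odd one would be a shorter odd cycle, contradicting minimality. Triangle-freeness also forces every odd cycle of $H$ to have length at least $5$.

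Next, $H$ cannot be bipartite: otherwise one of the two color classes would contain $\lceil n/2\rceil \ge 4$ independent vertices, violating $\alpha(H)\le 3$. Hence $H$ contains an odd cycle, and a shortest such cycle $C$ is induced and has length at least $5$. Since $|V(H)| \in \{7,8\}$, its length must be $5$ or $7$. This immediately proves (i).

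For (ii) it remains to exclude the possibility $|C| = 7$. Suppose for contradiction that $H$ contains an induced $7$-cycle $C = v_1 v_2 \cdots v_7$, and let $w$ be the remaining vertex. Then $S := N_H(w) \subseteq V(C)$ is independent in $H$, so $|S|\le 3$, and in particular $S$ is independent also inside $C$. I claim there exists a $3$-element independent set $T \subseteq V(C)\setminus S$; together with $w$ this gives an independent $4$-set in $H$, contradicting $\alpha(H)\le 3$. This follows from a short case analysis on $|S|$: for $|S|\le 2$, removing at most two vertices from $C_7$ leaves a graph still containing an independent $3$-set; for $|S|=3$, all independent $3$-subsets of $V(C)$ lie in one orbit under the rotation group of $C_7$, so up to relabeling $S = \{v_1,v_3,v_5\}$, and then $T = \{v_2,v_4,v_6\}$ works. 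The main point requiring care is precisely this last enumeration; everything else is a clean application of the induced-shortest-odd-cycle lemma together with the bipartite counting argument.
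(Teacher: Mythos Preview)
Your proof is correct. For part (i) your argument coincides with the paper's: both observe that $\overline{\G}$ is triangle-free and cannot be bipartite (else a colour class of size $\ge 4$ would give a $K_4$ in $\G$), and conclude that a shortest odd cycle, necessarily induced and of length $5$ or $7$, exists. You make the ``shortest odd cycle is induced'' step explicit, whereas the paper leaves it implicit.

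For part (ii) the two proofs genuinely diverge. The paper fixes an induced $7$-cycle on $1,\ldots,7$ in $\overline{\G}$ and performs an edge-by-edge case analysis on the eighth vertex: it first finds some $\{i,8\}\in\overline{\G}$, then alternately applies the no-triangle condition in $\overline{\G}$ and the no-$K_4$ condition in $\G$ to pin down further edges, eventually forcing a triangle $\{4,5,8\}$ in $\overline{\G}$. Your argument is more uniform: since $H$ is triangle-free, $S=N_H(w)$ is independent of size at most $3$, and you show combinatorially that $V(C_7)\setminus S$ always contains an independent $3$-set $T$, whence $T\cup\{w\}$ violates $\alpha(H)\le 3$. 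The paper's route needs no auxiliary lemma about $C_7$ but tracks several individual edges; yours replaces that bookkeeping with a single orbit observation about independent triples in $C_7$ and is arguably cleaner.
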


\begin{proof}
Let $n =7,8$. Assume for the contrary that $\overline{\G}$ contains no induced cycles of length 5 or 7.
By the assumption, $\overline{\G}$ has no cycles of length 3. Hence $\overline{\G}$ has no odd cycles. Thus, $\overline \G$ is a bipartite graph. As a consequence, the vertex set can be divided into two parts such that the induced subgraph of $\G$ on each part is a complete graph. One of these two parts must have at least 4 vertices so that $\G$ contains $K_4$, a contradiction to the assumption. \par
Let $n=8$. If $\overline\G$ has no induced cycle of length 5, it has an induced cycle of length 7, say on the ordered vertices $1,...,7$. 
Since three non-adjacent vertices of this cycle form a triangle of $\G$, all vertices 1,...,7 are vertices of a triangle of  $\G$ not containing 8. 
Since $\G$ doesn't contain $K_4$, the vertex 8 can't be adjacent to all vertices 1,...,7. Assume that $\{1,8\} \not\in \G$.
Since $\overline\G$ does not contain $K_3$, $\{2,8\}, \{7,8\} \in \G$. Hence $\{2,7,8\}$ is a triangle of $\G$. Since $\{2,4,7\},\{2,5,7\}$ are triangles of $\G$, we must have $\{4,8\},\{5,8\} \notin \G$. From this it follows that $\{4,5,8\}$ is a triangle of $\overline{\G}$, a contradiction.
\end{proof}

\section{Criteria for the equality of ordinary and symbolic powers}

In this section we will describe all pure two-dimensional simplicial complexes $\D$ with $I_\D^{(m)} = I_\D^m$ for some $m\ge 2$. \par

If $n = 3,4,5$, we can easily test the condition $I_\D^{(m)} = I_\D^m$ for every $m\ge 2$. In fact, if $n = 3$, then  $\D$ is a simplex and $I_\D = 0$. If $n = 4$, then $I_\D$ is a principal ideal so  $I_\D^{(m)} = I_\D^m$ for all $m\ge 2$.\par

\begin{Theorem}\label{5}  
Let $\D$ be a pure two-dimensional simplicial complex on $5$ vertices. Then  $I_\D^{(m)}=I_\D^m$ for some $m \geq 2$ (or all  $m \geq 2$) if and only if the vertex set can be divided into two nonfaces of two and three vertices. 
\end{Theorem}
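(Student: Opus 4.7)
The plan is to derive this theorem as an immediate consequence of Proposition \ref{graph} combined with a short combinatorial translation. Since $n=5$ and $\dim \D = 2 = n-3$, Proposition \ref{graph} reduces the statement to showing that $\D_c$ is bipartite if and only if $[5]$ admits a partition $A \cup B$ with $|A|=2$, $|B|=3$, and both $A$ and $B$ nonfaces of $\D$.

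For the forward implication, suppose $\D_c$ is bipartite with bipartition $\{V_1,V_2\}$. I first rule out that either part is a singleton: if $V_1=\{v\}$, every edge of $\D_c$ would contain $v$, so $v$ would lie outside every facet of $\D$, contradicting purity together with the hypothesis that every vertex appears in a facet. Hence $\{|V_1|,|V_2|\}=\{2,3\}$. Writing $A$ for the part of size $2$ and $B$ for the part of size $3$, I would verify both are nonfaces. If $A$ were a face then, since facets have size $3$, some facet $F=A \cup \{b\}$ with $b \in B$ would exist, which would make $B \setminus \{b\} = [5]\setminus F$ an edge of $\D_c$ inside $B$, contradicting that $B$ is independent in $\D_c$. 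If $B$ were a face it would be a facet (since $|B|=3$ is maximal), and then $A=[5]\setminus B$ would be an edge of $\D_c$ inside $A$, contradicting that $A$ is independent.

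For the converse, assume such a partition $A \cup B$ of $[5]$ into nonfaces exists. I would show that $(A,B)$ is a valid $2$-coloring of $\D_c$. An edge $e \subseteq A$ of $\D_c$ would force $e=A$, so $B=[5]\setminus A$ would be a facet of $\D$, contradicting $B$ being a nonface. An edge $e \subseteq B$ of $\D_c$ would make $[5]\setminus e = A \cup (B \setminus e)$ a facet of $\D$ containing $A$, contradicting $A$ being a nonface. Hence every edge of $\D_c$ crosses the partition, so $\D_c$ is bipartite, and Proposition \ref{graph} yields $I_\D^{(m)}=I_\D^m$ for all $m \ge 2$.

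The content of the proof is just a clean dictionary between facets of $\D$ and edges of $\D_c$, and I expect the only real subtlety to be the size-constraint argument in the forward direction: without invoking purity and the assumption that every vertex appears, $\D_c$ could in principle be bipartite with a singleton part, in which case the desired nonface partition into pieces of sizes $2$ and $3$ would not exist.
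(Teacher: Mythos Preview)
Your proposal is correct and follows essentially the same route as the paper: both invoke Proposition~\ref{graph} to reduce the question to bipartiteness of $\D_c$, and then translate bipartiteness into a partition of $[5]$ into two nonfaces. The paper compresses this translation into two sentences, while you spell out the size constraint and the nonface verification explicitly; in particular, your use of purity together with the standing hypothesis that every vertex appears in $\D$ to rule out a singleton part is exactly the point the paper leaves implicit.
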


\begin{proof} 
By Proposition \ref{graph}, $I_\D^{(m)}=I_\D^m$ for some $m \geq 2$ (or all  $m \geq 2$) if and only if $\D_c$ is a bipartite graph. This means that the vertex set can be divided into two nonfaces. Since the vertex set has 5 elements, these nonfaces have two and three vertices.
\end{proof}

For $n \ge 6$ we have different criteria for  $I_\D^{(m)}=I_\D^m$ when $m = 2$, $m=3$ and $m \ge 4$.

\begin{Theorem}\label{case 2}
Let $\D$ be a pure two-dimensional simplicial complex on $n\geq 6$ vertices. Then $I_\D^{(2)}=I_\D^2$ if and only if $\D$ satisfies the following conditions:
\par {\rm (i) } $\overline\D_1$ does not contain ${K_3}.$
\par {\rm (ii) } If there are 4 vertices, say 1,2,3,4 such that $\{1,2,3\},\{1,2,4\},\{1,3,4\}$ $\notin \D$, then one of the edges $\{1,2\},\{1,3\},\{1,4\}$ doesn't belong to $\D$.
\par {\rm (iii) } If there are 4 vertices, say 1,2,3,4 such that $\{1,2,3\},\{1,2,4\},\{1,3,4\},\{2,3,4\}$ $ \notin \D$, then the set $\{1,2,3,4\}$ can be divided into two nonfaces of two vertices. 
\par {\rm (iv) } Every set of 5 vertices can be divided into two nonfaces of two and three vertices. 
\end{Theorem}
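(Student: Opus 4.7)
The theorem will be proved in two directions. For the necessity of (i)--(iv) I will invoke Corollary \ref{cover 2} together with four carefully chosen 2-covers of $\D_c$, one witnessing each condition. For sufficiency I will argue that under the combinatorial hypotheses (i)--(iv), every 2-cover $\a$ of $\D_c$ admits a decomposition $\a = \b + \c$ into two 1-covers, proceeding by a case analysis on $|\operatorname{supp}(\a)|$.

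\textbf{Necessity.} Condition (i) is immediate from Lemma \ref{nontriangle}. For (ii), assume $\{1,2,3\}$, $\{1,2,4\}$, $\{1,3,4\}$ are nonfaces of $\D$ and consider $\a = 2\e_1 + \e_2 + \e_3 + \e_4$. Since no facet of $\D$ can equal one of these three 3-sets, a short check shows that $\a$ is a 2-cover. Writing $\a = \b + \c$ with $\b,\c$ 1-covers whose supports lie in $\{1,2,3,4\}$, note that $b_i + c_i = 1$ for $i \in \{2,3,4\}$ while $b_1 + c_1 = 2$. A case analysis on whether $1$ lies in both supports or only one forces, via the nonface condition on supports, at least one of $\{1,2\}$, $\{1,3\}$, $\{1,4\}$ to be a nonface of $\D$. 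Conditions (iii) and (iv) are obtained by the same method applied to $\a = \mathbf{1}_{\{1,2,3,4\}}$ and $\a = \mathbf{1}_{\{1,\dots,5\}}$: the supports of $\b$ and $\c$ partition the vertex set into two nonfaces, and since singletons are always faces the sizes must be $2{+}2$ and $2{+}3$, respectively.

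\textbf{Sufficiency.} Let $\a$ be any 2-cover of $\D_c$ and set $V = \operatorname{supp}(\a)$, $W = \{i \in V : a_i \ge 2\}$. The 2-cover inequality immediately implies that $V$ is a nonface of $\D$. The key observation driving the argument is that any pair of nonfaces $A, B$ of $\D$ with $A \cup B = V$ and $A \cap B \subseteq W$ produces the desired decomposition: on $A \cap B$ I split $a_i = b_i + c_i$ with both summands $\ge 1$, which is possible since $a_i \ge 2$ there, while on $A \setminus B$ (respectively $B \setminus A$) I assign the full weight $a_i$ to $\b$ (respectively $\c$). This yields 1-covers $\b, \c$ with $\operatorname{supp}(\b) = A$, $\operatorname{supp}(\c) = B$, and $\b + \c = \a$. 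Thus it suffices to produce such a pair $A, B$, and I do so by cases on $|V|$. For $|V| \le 3$ the 2-cover inequalities force most or all vertices of $V$ into $W$, and trivial choices such as $A = B = V$ or $A$ equal to a 2-element nonface combined with $B = V$ work; condition (i) rules out the only obstruction, namely a 3-vertex independent set in $\D_1$. For $|V| = 4$, let $S = \{v \in V : V \setminus \{v\} \text{ is a facet of } \D\}$; then $S \subseteq W$. If $|S| = 0$, condition (iii) partitions $V$ into two 2-element nonfaces. If $|S| = 1$, the three 3-subsets of $V$ containing the unique vertex of $S$ are all nonfaces, so condition (ii) yields a 2-element nonface through that vertex, which I pair with either $V$ itself or a 3-element nonface of $V$ to achieve $A \cap B \subseteq W$. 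The cases $|S| \in \{2, 3, 4\}$ are handled directly because enough coordinates are forced into $W$ to allow pairs such as the two nonface 3-subsets of $V$, or $V \setminus \{v\}$ together with $V$. For $|V| = 5$, condition (iv) partitions $V$ into nonfaces of sizes $2$ and $3$. For $|V| \ge 6$, I apply (iv) to any 5-subset of $V$ and absorb the remaining vertices into one of the two parts, still a nonface as a superset of a nonface.

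\textbf{Main obstacle.} The delicate step is the $|V| = 4$ sub-case with $|S| = 1$: neither a clean partition of $V$ into two nonfaces exists in general (the 2-element subsets of the unique facet $V \setminus \{v_0\}$ are faces) nor does the $A = B = V$ trick apply (since $W$ can be as small as $\{v_0\}$). This is exactly where condition (ii) is indispensable, and the bookkeeping requires distinguishing whether the second endpoint of the 2-element nonface provided by (ii) lies in $W$ or in $V \setminus W$, the two sub-cases calling for slightly different companion nonfaces $B$ to ensure $A \cap B \subseteq W$.
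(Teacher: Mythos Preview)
Your proposal is correct and follows essentially the same approach as the paper: the same witnessing monomials for necessity (the paper's $x_1^2x_2x_3x_4$, $x_1x_2x_3x_4$, $x_1\cdots x_5$ are precisely your 2-covers) and the same case split on support size for sufficiency, with your ``key observation'' about nonfaces $A,B$ satisfying $A\cup B=V$ and $A\cap B\subseteq W$ being a tidy abstraction of what the paper carries out term by term. One small point to tighten: in the $|V|=3$ sub-case where exactly two of the three edges are nonfaces, say $\{i,k\}$ and $\{j,k\}$, only $k$ is forced into $W$ (there is no facet containing $\{j,k\}$ to push $i$ into $W$), so neither of your stated choices $A=B=V$ or $B=V$ achieves $A\cap B\subseteq W$; here you must take $A=\{i,k\}$ and $B=\{j,k\}$, giving $A\cap B=\{k\}\subseteq W$, which is exactly the paper's final sub-case of its Case~2.
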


\begin{proof}  Assume that $I_\D^{(2)}=I_\D^2$. Then (i) follows from Lemma \ref{nontriangle}. 
For (ii) we set $f = x_1^2x_2x_3x_4$. Then $f \in I_\D^{(2)}=I_\D^2$. Therefore, $f$ is divisible by a monomial of degree 2 containing $x_1$ in $I_\D$. This monomial must be one of the three monomials $x_1x_2,x_1x_3,x_1x_4$. Hence one of the edges $\{1,2\},\{1,3\},\{1,4\}$ doesn't belong to $\D$.
For (iii) we set $f = x_1x_2x_3x_4$. Since every facet $F$ of $\D$ contains at most two vertices in $\{1,2,3,4\}$, $f \in P_F^2$. Hence $f \in I_\D^{(2)} = I_\D^2$. Therefore, $f$ is a product of two monomials of degree 2 in $I_\D$. We may assume that $x_1x_2,x_3x_4 \in I_\D$. Then $\{1,2\},\{3,4\} \notin \D$.
For (iv) we first note that every 5-set of vertices is a 2-cover of $\D_c$. Therefore, it can be divided into two nonfaces of two and three vertices.  \smallskip

Now assume that $\D$ satisfies the conditions (i), (ii), (iii) and (iv). We have to show that every monomial $f$ of $I_\D^{(2)}$ also belongs to $I_\D^2.$ We distinguish four cases.\smallskip

{\it Case 1:} $f$ involves only two variables, say $x_1,x_2.$  Then $\{1,2\} \notin \D_1.$ 
Therefore, there exists a facet $F\in \F(\D)$
such that $1 \in F$ and $\{1,2\} \not\subset F.$ 
Since $f\in P_{F}^2$ and $P_{F}$ does not contain $x_1,$ $f$ is divisible by $x_2^2.$ Similarly, $f$ is divisible by $x_1^2.$ Hence $f$ is divisible by $(x_1x_2)^2$ so that $f \in I_\D^2.$ \smallskip

{\it Case 2:} $f$ involves only three variables, say $x_1,x_2,x_3.$ Then $\{1,2,3\} \notin \D.$ By (i), we may assume that $\{1,2\} \in \D.$ Let $F$ be an arbitrary facet of $\D$ containing $\{1,2\}.$ 
Since $f\in P_{F}^2$ and since $P_{F}$ does not contain $x_1,x_2,$ $f$ is divisible by $x_3^2.$ \par
If $\{1,3\}, \{2,3\} \in \D_1,$ then we argue as above to see that $f$ is also divisible by $x_1^2, x_2^2.$ Therefore, $f$ is divisible by $(x_1x_2x_3)^2,$ which implies $f\in I_\D^2.$ \par
If $\{1,3\} \notin \D_1$ and $\{2,3\} \in \D_1$,  then $x_1x_3 \in I_\D$ and, similarly, $f$ is divisible by $x_1^2$. Hence $f$ is divisible by $(x_1x_3)^2$, which implies $f \in I_\D^2.$ \par
If $\{1,3\} \in \D_1$ and $\{2,3\} \notin \D_1$,  then $f$ is divisible by $x_2^2$ and $x_2x_3 \in I_\D$. Hence $f$ is divisible by $(x_2x_3)^2$, which implies $f \in I_\D^2.$ \par
If $\{1,3\}, \{2,3\} \notin \D_1,$ then $x_1x_3,x_2x_3 \in I_\D$. Since $f$ is divisible by $x_1x_2x_3^2$,  $f \in I_\D^2.$ \smallskip

{\it Case 3:} $f$ involves only four variables, say $f=x_1^{a_1}x_2^{a_2}x_3^{a_3}x_4^{a_4}$ with $a_1\geq a_2 \geq a_3 \geq a_4 \geq 1.$ \par
If $a_4\geq 2,$ then $f$ is divisible by $(x_1x_2x_3x_4)^2$. Since every 4-set of vertices is a cover of $\D_c$, $x_1x_2x_3x_4 \in I_\D$. Hence $f \in I_\D^2.$\par
If $a_3 >a_4=1,$ then $f \notin P_F^2$ for $F = \{1,2,3\}$. So $\{1,2,3\} \notin \D.$ Hence $x_1x_2x_3 \in I_\D$. Since $f$ is divisible by $x_1^2x_2^2x_3^2$, $f\in I_\D^2.$ \par
If $a_2 > a_3=1,$  we have, similarly, $\{1,2,3\}, \{1,2,4\} \notin \D.$ Hence $x_1x_2x_3,$ $x_1x_2x_4 \in I_\D$. Since $f$ is divisible by $x_1^2x_2^2x_3x_4$, $f\in I_\D^2.$ \par
If $a_1 > a_2 =1,$ then $\{1,2,3\}, \{1,2,4\}, \{1,3,4\} \notin \D.$ By (ii) we may assume that $\{1,2\} \notin \D$. Then $x_1x_2 \in I_\D$. Therefore, $f$ is divisible by $x_1^2x_2x_3x_4 = (x_1x_2)(x_1x_3x_4)$, which implies $f \in I_\D^2$. \par
If $a_1 = 1$, then $\{1,2,3\}, \{1,2,4\}, \{1,3,4\},\{2,3,4\} \notin \D.$ By (iii) we may assume that $\{1,2\},\{3,4\} \notin \D$. Then $x_1x_2,x_3x_4 \in I_\D$. Since $f = x_1x_2x_3x_4$, $f \in I_\D^2$. \smallskip

{\it Case 4:} $f$ involves five or more variables, say $f$ is divisible by $x_1x_2x_3x_4x_5.$ 
By (iv) we may assume that $\{1,2\},\{3,4,5\} \notin\D$. Then $x_1x_2,x_3x_4x_5 \in I_\D$. Hence $f\in I_\D^2.$
\end{proof}

For $m \ge 3$ we first have to study the case $n=6$.

\begin{Theorem}\label{6}
Let $\D$ be a pure two-dimensional simplicial complex on $6$ vertices. Then $I_\D^{(m)}=I_\D^m$ for some $m \ge 3$ (or all $m \ge 1$) if and only if 
$\overline{\D_1}$ contains three disjoint edges and $\D_1$ contains two disjoint triangles.
\end{Theorem}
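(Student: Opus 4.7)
First, I note a useful joint reformulation of the two hypotheses. If $\D_1$ contains two disjoint triangles they must partition $[6]$ into, say, $A=\{1,2,3\}$ and $B=\{4,5,6\}$; since the pairs inside each part are edges of $\D_1$, the non-edges live between $A$ and $B$, and the three disjoint non-edges are then a perfect matching between $A$ and $B$, which we may take to be $\{1,4\},\{2,5\},\{3,6\}$. The two conditions together are therefore equivalent to $K_3\cup K_3\subseteq \D_1\subseteq K_{2,2,2}$ for some vertex labeling.

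For sufficiency I would use Corollary~\ref{cover 2} and show that every $m$-cover of $\D_c$ is a sum of $m$ 1-covers for each $m\ge 1$, which yields the stronger statement $I_\D^{(m)}=I_\D^m$ for all $m\ge 1$. In the extreme case $\D_1=K_3\cup K_3$, $I_\D$ is exactly the edge ideal of the bipartite graph $K_{3,3}$ and the claim follows from \cite[Theorem 5.9]{SVV}. In the general case $K_3\cup K_3\subseteq \D_1\subseteq K_{2,2,2}$, the minimal nonfaces of $\D$ are the non-edges of $\D_1$ (a bipartite edge set between $A$ and $B$ containing the matching) together possibly with triangles of $\D_1$ that are not 2-faces of $\D$. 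The bipartite shape of the non-edges allows a direct greedy argument: given an $m$-cover $\a$, choose a 1-cover supported on the matching (or, when needed, on a triangle-shaped nonface) that leaves an $(m-1)$-cover, and iterate.

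For necessity I argue the contrapositive: if (a) or (b) fails I exhibit an element of $I_\D^{(m)}\setminus I_\D^m$ for every $m\ge 3$. Corollary~\ref{Ramsey}(ii) shows that $\D_1$ must be a Ramsey $(4,3)$-graph on six vertices, so the candidate $\D_1$'s are finite in number. A short enumeration of triangle-free graphs on six vertices with independence number at most three (using $R(3,3)=6$ together with Tur\'an-type constraints) singles out the ones failing (a) or (b); the prototypical offender is the wheel $\D_1=W_5$ (equivalently $\overline{\D_1}=C_5+v$), which fails both. In this case all five triangles of $\D_1$ pass through the apex, so the facets of $\D$ are uniquely forced and $I_\D$ becomes, after adjoining the apex variable, the edge ideal of the odd cycle $C_5$.

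To rule this $\D$ out I use an odd-cycle witness: $x_1x_2x_3x_4x_5$ lies in $I_\D^{(3)}$ because every facet of $\D_c$ sums to $3$, yet has degree $5<6=2\cdot 3$ and so cannot be in $I_\D^3$. For each $m\ge 4$ an analogous "unbalanced" exponent vector such as $(2,1,1,1,2)$, $(2,2,1,2,2),\ldots$ (still an $m$-cover of $\D_c$ but with total degree strictly less than $2m$) gives an element of $I_\D^{(m)}\setminus I_\D^m$. The remaining offenders $\D_1$ (the case analysis should leave very few) are disposed of by a nearly identical non-bipartite argument. The main obstacle will be threading through the finite Ramsey $(4,3)$ enumeration on six vertices tightly enough to identify all $\D_1$ failing (a) or (b), and then verifying the $m$-cover property of the unbalanced monomials uniformly in $m\ge 3$; both tasks are elementary but require careful bookkeeping.
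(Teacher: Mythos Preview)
Your reformulation $K_3\cup K_3\subseteq \D_1\subseteq K_{2,2,2}$ is correct and helpful, but both directions of your argument have genuine gaps.

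\textbf{Sufficiency.} Peeling off a 1-cover from an $m$-cover leaves an $(m-1)$-cover only when that 1-cover meets every facet of $\D_c$ in \emph{exactly one} vertex. The matching monomials $x_1x_4,x_2x_5,x_3x_6$ have this property, and the paper uses precisely this peel. A triangle nonface such as $x_1x_2x_3$ does not: if $\{4,5,6\}\in\F(\D)$ then $\{1,2,3\}\in\F(\D_c)$ and subtracting $(1,1,1,0,0,0)$ drops that coordinate sum by $3$, not $1$. The paper therefore does \emph{not} continue peeling when no matching monomial divides $f$; instead it observes that $f$ then involves at most one variable from each matched pair, hence at most three variables, and finishes with a short explicit case analysis in which the triangle hypothesis is used only to guarantee that certain edges lie in $\D$.

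\textbf{Necessity.} Two problems. First, $\D_1$ does not determine $\D$: a triangle of $\D_1$ may or may not be a facet, so after enumerating the Ramsey $(4,3)$ graphs on six vertices you would still have to treat, for each one, every pure two-dimensional $\D$ with that $1$-skeleton. Your $W_5$ case is special because each rim edge lies in a unique triangle; in general this fails. Second, your witness $(2,1,1,1,2)$ for $m=4$ is not a $4$-cover of $\D_c$ in the $W_5$ example: the facet $\{2,3,4\}$ of $\D_c$ has coordinate sum $3$. The paper bypasses enumeration entirely: starting from any non-edge $\{1,2\}$ it shows $f=x_1^{m-2}x_2^{m-2}x_3x_4x_5x_6\in I_\D^{(m)}=I_\D^m$, notes $\deg f=2m$, and reads off three disjoint non-edges directly from any factorization of $f$ into $m$ quadratic generators; the two disjoint triangles are then forced by a short contradiction argument. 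This is uniform in $m$ and in $\D$ and is the route you should take.
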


\begin{proof} 
Assume that $I_\D^{(m)}=I_\D^m$ for some $m \ge 3$. 
By Corollary \ref{Ramsey}, $\D_1$ doesn't contain $K_4$.
Therefore, there is at least an edge, say $\{1,2\} \notin \D$, which implies $x_1x_2 \in I_\D$.
Let $f = x_1^{m-2}x_2^{m-2}x_3x_4x_5x_6$. Note that $x_1\cdots x_6 \in I_\D^{(3)}$. Then $f \in I_\D^{m-3}I_\D^{(3)} \subseteq I_\D^{(m)}$.
From this it follows that $f \in I_\D^m$. Since $\deg f = 2m$, $f$ is the product of $m$ monomials of degree 2 in $I_\D$. Up to a permutation of the indices $3,4,5,6$, there are only the following three such decompositions of $f$.\par

If $f=(x_1x_2)^{m-2}(x_3x_4)(x_5x_6)$, then $\{1,2\},$ $\{3,4\}, \{5,6\}$ are three disjoint edges of $\overline{\D_1}$. \par

If $f=(x_1x_2)^{m-3}(x_1x_3)(x_2x_4)(x_5x_6)$, then  $\{1,3\},$ $\{2,4\},$ $\{5,6\}$ are three disjoint edges of $\overline{\D_1}$. \par

If $f=(x_1x_2)^{m-4}(x_1x_3)(x_1x_4)(x_2x_5)(x_2x_6)$, then $\{1,3\},\{1,4\},\{2,5\},\{2,6\} \in \overline{\D_1}$. By Corollary \ref{Ramsey},  $\overline{\D_1}$ does not contain $K_3.$  Hence $\{3,4\}, \{5,6\} \in \D_1.$ If $\overline{\D_1}$ doesn't contain three disjoint edges, $\{3,5\},$ $\{3,6\},$ $\{4,5\},$ $\{4,6\} \in \D.$  Therefore, $\D_1$ contains the complete graph on the vertices $3,4,5,6$, a contradiction to the fact that $\D_1$ doesn't contain $K_4$. \par

So we have shown that $\overline{\D_1}$ contain three disjoint edges. 
Let $\{1,4\},$ $\{2,5\},$  $\{3,6\}$ be three disjoint edges of $\overline{\D_1}$, that is $\{1,4\},\{2,5\}, \{ 3,6\} \notin \D$. Without restriction we may assume that $\{1,2,3\} \in \D.$ \par

Assume that $\D_1$ doesn't have two disjoint triangles. 
Then $\{4,5,6\}$ is not a triangle of $\D_1$. Hence we may assume that $\{4,6\} \notin \D$. Since $\overline{\D_1}$ doesn't contain $K_3$, $\{1,6\}, \{3,4\} \in \D$. If $\{4,5\} \notin \D$, we also have $\{1,5\},\{2,4\},\{5,6\} \in \D$. From this it follows that $\{1,5,6\},\{2,3,4\}$ are two disjoint triangles of $\D_1$, a contradiction. So we must have $\{4,5\} \in \D$.
The facet of $\D$ containing $\{4,5\}$ must be $\{3,4,5\}$. Hence $\{1,2,6\}$ isn't a triangle of $\D_1$. From this it follows that $\{2,6\} \notin \D$. Similarly, $\{2,4\} \notin \D$.
Hence $\{2,4,6\}$ is a triangle of $\overline{\D_1}$, a contradiction.
So we have shown that  $\D_1$  has two disjoint triangles. This completes the proof for the necessity. \smallskip

For the sufficiency we may assume that $\{1,4\},$ $\{2,5\},$  $\{3,6\}$ are disjoint edges of $\overline{\D_1}$ and $\{ 1,2,3\},$ $\{4,5,6\}$ are disjoint triangles of $\D_1.$ Let $f$ be an arbitrary monomial of $I_\D^{(m)}, m \ge 2$. We have to prove that $f \in I_\D^m.$  \par

Assume that $f$ is divisible by a monomial $g$ of the form $x_1x_4,$ $x_2x_5$ or $x_3x_6$. Since $g$ corresponds to an 1-cover of $\D_c$ which meets every facet of $\D_c$ at only one vertex, $f/g$ must correspond to an $m-1$ cover of $\D_c.$ Therefore, $f/g \in  I_{\D}^{(m-1)}$. By induction we may assume that $f/g \in I_\D^{m-1}$. From this it follows that $f \in I_\D^m$. \par

Assume that $f$ is not divisible by any of the monomials  $x_1x_4,$ $x_2x_5,$ $x_3x_6.$ Then $f$ involves at most three variables. \par

If $f$ involves only two variables, say $x_i^a x_j^b$.
Then $x_ix_j \in I_\D,$ i.e. $\{i,j\} \notin \D_1.$ 
Let $F$ be a facet of $\Delta$ such that $j\in F$ and $i \notin F.$
Since $f\in P_{F}^{m}$ and since $x_j \notin P_{F}$, $f$ is divisible by $x_i^m.$ 
Similarly, $f$ is also divisible by $x_j^m.$ Therefore, $f$ is divisible by $(x_ix_j)^m,$ which implies $f\in I_{\Delta}^m.$ \par

If $f$ involves three variables, we may assume that these variables are $x_1,x_2,x_3$ or $x_1,x_2,x_6$. \par

If $f=x_1^{a_1}x_2^{a_2}x_3^{a_3}$, then $x_1x_2x_3 \in I_\D$.  
Since $\{2,3\} \in \D$, there exist a facet $F$ of $\D$ containing $\{2,3\}$. We have $x_2x_3 \notin P_F$. 
Since $f \in P_{F}^m$, this implies $x_1^{a_1} \in P_F^m$. Hence $a_1 \ge m$.
Similarly, we also have $a_2 \geq m$ and $a_3\geq m.$ 
Therefore, $f$ is divisible by $(x_1x_2x_3)^m$ so that $f \in I_\D^m.$ \par

If  $f=x_1^{a_1}x_2^{a_2}x_6^{a_6}$, then $x_1x_2x_6 \in I_\D$. Since $\{1,2\} \in \D$, we can show similarly that $a_6 \ge m$.
If $\{1,6\}, \{2,6\} \in \D$, we also have $a_1, a_2 \geq m.$
Hence $f$ is divisible by $(x_1x_2x_6)^m \in I_\D^m.$
If $\{1,6\} \in \D$ and $\{2,6\} \notin \D$, then $a_2, a_6 \geq m$ and $x_2x_6 \in I_\D$. Thus, $f$ is divisible by $(x_2x_6)^m \in I_\D^m.$ 
Similarly, if $\{1,6\} \notin \D$ and $\{2,6\} \in \D$, then
$f $ is divisible by $(x_1x_6)^m \in I_\D^m.$ 
If $\{1,6\}, \{2,6\} \notin \D$, then $\{1,6\},\{2,6\} \in I_\D$. 
Let $F$ be a facet of $\D$ containing the vertex 6.  Since $x_6 \notin P_F$ and $f \in P_F^m$, $x_1^{a_1}x_2^{a_2} \in P_F^m$. 
Therefore, $a_1+a_2 \geq m$.  Without restriction we may assume that $a_1+a_2 = m$ and $a_6=m.$
Then $f$ is divisible by $(x_1x_6)^{a_1}(x_2x_6)^{a_2} \in I_\D^{a_1+a_2}=I_\D^m.$ \par
So we always have $f \in I_\D^m,$ as desired.
\end{proof}

Using Theorem \ref{6} we can easily construct complexes with $I_\D^{(2)}= I_\D^2$ but $I_\D^{(m)}\neq I_\D^m$ for all $m \ge 3$. 

\begin{Example}
{\rm Let $\D$ be the complex with
$$\F(\D) = \big\{\{1,2,3\},\{1,2,4\},\{1,3,5\},\{1,4,6\},\{1,5,6\},\{4,5,6\}\big\}.$$

\begin{center}
\psset{unit=0.8cm}
\begin{pspicture}(-0.3,-0.3)(2.3,2.3)
\pspolygon(0,0)(1,0.5)(0,2)
\pspolygon(0,2)(1,1.5)(2,2)
\pspolygon(1,0.5)(2,0)(2,2)
\psline(1,0.5)(1,1.5)
\psline(0,0)(2,0)
\rput(-0.2,0){2}
 \rput(2.2,0){3}
 \rput(1,0.25){1}
 \rput(1,1.75){6}
 \rput(-0.2,2){4}
 \rput(2.2,2){5}
\end{pspicture}
\end{center} 

\noindent It is easy to check that $\D$ satisfies the conditions of Theorem \ref{case 2}. Hence $I_\D^{(2)} = I_\D^2$. Since the vertex 1 is adjacent to all other vertices, $\overline{\D_1}$ doesn't have three disjoint edges. Hence $I_\D^{(m)}\neq I_\D^m$ for all $m \ge 3$.}
\end{Example}

\begin{Theorem}\label{case 3}
Let $\D$ be a pure two-dimensional simplicial complex on $n\geq 6$ vertices. Then  $I_\D^{(3)}=I_\D^3$ if and only if one of the following conditions is satisfied:\par 
{\rm (i) } $n=6,$ $\overline{\D_1}$ has three disjoint edges and $\D_1$ has two disjoint triangles. \par
{\rm (ii) } $n=7$ and up to a permutation of the variables,
$$I_\D=(x_1x_2,x_2x_3,x_3x_4,x_4x_5,x_5x_6,x_6x_7,x_7x_1).$$
\end{Theorem}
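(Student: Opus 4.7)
The plan is to combine Ramsey-theoretic bounds for necessity with a reduction to the already-established equality $I_\D^{(2)}=I_\D^2$ for sufficiency.

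For necessity, Corollary \ref{Ramsey}(ii) forces $\D_1$ to be a Ramsey $(4,3)$-graph, so $n\le 8$. The case $n=6$ is handled directly by Theorem \ref{6}. For $n=7$ or $n=8$, Proposition \ref{Ramsey properties} gives an induced $5$-cycle in $\overline{\D_1}$ (always when $n=8$, and as one of two possibilities when $n=7$). If such a $5$-cycle sits on $\{1,\ldots,5\}$, the $\D_1$-edges inside this set are the five diagonals, which form another $5$-cycle and hence are triangle-free; in particular no facet of $\D$ lies in $\{1,\ldots,5\}$, so every facet meets $\{1,\ldots,5\}$ in at most two vertices, which makes $x_1x_2x_3x_4x_5$ a $3$-cover of $\D_c$. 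Thus $x_1\cdots x_5\in I_\D^{(3)}$, but its degree $5$ is strictly less than $6$, the minimum degree of $I_\D^3$, contradicting $I_\D^{(3)}=I_\D^3$. This rules out $n=8$ and the $5$-cycle case at $n=7$; for $n=7$ only the induced $7$-cycle alternative of Proposition \ref{Ramsey properties}(i) survives, forcing $\overline{\D_1}=C_7$. To finish necessity I will then show $\D$ is the flag complex of $\overline{C_7}$: in $\overline{C_7}$ every pair at cyclic distance three lies in exactly one triangle, so if any triangle $T$ were not a face of $\D$, the distance-3 pair inside $T$ would become a maximal face of dimension one, violating purity.

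For sufficiency, case (i) is again Theorem \ref{6}. In case (ii), I will first check the hypotheses (i)--(iv) of Theorem \ref{case 2} to obtain $I_\D^{(2)}=I_\D^2$: $\overline{\D_1}=C_7$ is triangle-free, any four vertices whose four triples are non-faces induce in $C_7$ a subgraph of independence number at most two and hence contain two disjoint non-edges, and any five vertices omit two of the seven and therefore carry at least three $C_7$-edges, enough to extract a $2{+}3$ splitting. The core step will then be the following key lemma: for every $3$-cover $\a$ of $\D_c$ there is a $C_7$-edge $\{u,v\}$ with $a_u,a_v\ge 1$ such that $x^\a/(x_ux_v)$ is a $2$-cover of $\D_c$. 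Granting this, $x^\a=(x_ux_v)\cdot x^\a/(x_ux_v)\in I_\D\cdot I_\D^{(2)}=I_\D\cdot I_\D^2=I_\D^3$, as desired.

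The key lemma will be proved by exploiting the fact that each of the seven minimum vertex covers of $C_7$ contains exactly one $C_7$-edge, giving a bijection $e\leftrightarrow C(e)$ with $C(\{i,i+1\})=\{i,i+1,i+3,i+5\}$ modulo $7$. Consequently $|C\cap\{u,v\}|=2$ precisely when $C=C(e)$ and $|C\cap\{u,v\}|\le 1$ otherwise, so the $2$-cover condition on $x^\a/(x_ux_v)$ reduces to the single inequality $\sum_{i\in C(e)}a_i\ge 4$. I will argue by contradiction, assuming $\sum_{i\in C(e)}a_i=3$ for every active edge $e$ (one with $a_u,a_v\ge 1$). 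Since the support of $\a$ must meet every minimum cover, it cannot be an independent set of $C_7$, so active edges exist. A finite case analysis on the zero set $Z=\{i:a_i=0\}$ then closes the argument: for each size and shape of $Z$ (empty, single vertex, adjacent or non-adjacent pair, triple, etc.), the tight equations $a_i+a_{i+1}+a_{i+3}+a_{i+5}=3$ for the active $i$'s, combined with the remaining $3$-cover inequalities, either become inconsistent with the positivity $a_i\ge 1$ on the support or force $|\a|<6$, contradicting the bound $4|\a|\ge 21$ obtained by summing the $3$-cover constraints over all seven minimum covers. The main obstacle will be organizing this exhaustive but finite case check cleanly; the cyclic symmetry of $C_7$ and the explicit form of $C(e)$ will keep each subcase short.
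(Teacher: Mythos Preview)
Your necessity argument is essentially the paper's: the Ramsey bound $n\le 8$, the degree-$5$ contradiction from an induced $5$-cycle in $\overline{\D_1}$, and the reduction at $n=7$ to $\overline{\D_1}=C_7$ all match. Your final identification of $\D$ via the purity argument (each distance-$3$ pair lies in a unique $\overline{C_7}$-triangle) is a pleasant variant of the paper's ``each such edge must lie in a facet, and there is only one candidate'' argument, but the content is the same.

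For sufficiency in case (ii) you take a genuinely different route. The paper simply observes that $I_\D$ is the edge ideal of $C_7$ and cites \cite[Lemma 3.1]{CM}, which gives $I^{(m)}=I^m$ for $m\le 3$ for odd cycles; this is a one-line import. Your plan is self-contained: verify conditions (i)--(iv) of Theorem~\ref{case 2} to get $I_\D^{(2)}=I_\D^2$, then prove a peeling lemma (every $3$-cover admits an active $C_7$-edge $e$ with $\sum_{j\in C(e)}a_j\ge 4$) by a case analysis on the zero set $Z$. This is correct---the bijection $e\leftrightarrow C(e)$ and the observation $|C(e')\cap e|\le 1$ for $e'\neq e$ cleanly reduce the $2$-cover check to a single inequality, and the case analysis does close (small $|Z|$ forces $\sum_{C(e)}a_j\ge 4$ for some active $e$ by positivity; larger $|Z|$ collides with $|\a|\ge 6$ or with an inactive $3$-cover constraint). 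One small omission in your sketch: you describe the check of Theorem~\ref{case 2}(iii) and (iv) but not (ii); the same triangle-freeness of $C_7$ dispatches it (if $\{1,2\},\{1,3\},\{1,4\}\in\D_1$ then the three non-face triples force $\{2,3\},\{2,4\},\{3,4\}\in C_7$, impossible). The trade-off is clear: the paper's citation is shorter and situates the result in the known theory of edge ideals of cycles, while your argument stays entirely within the paper's cover-combinatorics framework and avoids external dependence, at the cost of a routine but non-negligible case check.
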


\begin{proof} 
Assume that $I_\D^{(3)}=I_\D^3$. Then $\D_1$ is a Ramsey (4,3)-graph by Corollary \ref{Ramsey}.  Hence $n \le 8$ \cite{Ra}.\par
If $n = 6$, then (i) is satisfied by Theorem \ref{6}. \par

If $n \ge  7$, then $\overline{\D_1}$ has an induced cycle of length 5 or 7 by Proposition \ref{Ramsey properties}. If $\overline{\D_1}$ has an induced cycle of length 5, say on the ordered vertices $\{1,2,3,4,5\}$,  then $x_1\cdots x_5 \in I_\D^{(3)}$. It follows that $x_1\cdots x_5 \in I_\D^3$. Hence $\deg x_1\cdots x_5 \ge 6$, a contradiction. So $\overline{\D_1}$ has an induced cycle of length 7, say on the ordered vertices $1,...,7$. Moreover, we must have $n = 7$ by Proposition \ref{Ramsey properties}. 

\begin{center}
\psset{unit=0.9cm}
\begin{pspicture}(-1.4,-0.9)(1.4,1.3)
\pspolygon(0,1)(-0.9,-0.2)(0.9,-0.2)
\pspolygon(-0.7,0.6)(0.7,0.6)(0.4,-0.9)
\pspolygon(-0.7,0.6)(0.7,0.6)(-0.4,-0.9)
\pspolygon(0,1)(-0.9,-0.2)(0.4,-0.9)
\pspolygon(0,1)(0.9,-0.2)(-0.4,-0.9)
\pspolygon(-0.7,0.6)(-0.4,-0.9)(0.9,-0.2)
\pspolygon(0.7,0.6)(0.4,-0.9)(-0.9,-0.2)
 \rput(0,1.25){1}
 \rput(0.9,0.6){2}
 \rput(1.1,-0.2){3}
 \rput(0.6,-0.9){4}
 \rput(-0.6,-0.9){5}
 \rput(-1.1,-0.2){6}
 \rput(-0.9,0.6){7}
\end{pspicture}
\end{center} 

\noindent Since $\{1,5\} \in \D$, there is a facet of $\D$ containing $\{1,5\}$. It is easy to see that this facet must be $\{1,3,5\}$. 
Similarly, we also have 
$$\{1,3,6\}, \{1,4,6\}, \{2,4,6\}, \{2,4,7\}, \{2,5,7\},\{3,5,7\} \in \D.$$
Clearly, these are all possible facets for $\D$. Hence
$$\F(\D) = \big\{\{1,3,5\},\{1,3,6\},\{1,4,6\}, \{2,4,6\}, \{2,4,7\}, \{2,5,7\},\{3,5,7\}\big\}.$$
From this it follows that
 $I_\D=(x_1x_2,x_2x_3,x_3x_4,x_4x_5,x_5x_6,x_6x_7,x_7x_1).$ \par

For the sufficiency we assume that $\D$ satisfies one of the conditions (i) and (ii).
For (i) we have  $I_\D^{(3)} = I_\D^3$ by Theorem \ref{6}. For (ii) we note first that $I_\D$ is the edge ideal of a cycle of length 7 so that 
$I_\D^{(3)} = I_\D^3$ by \cite[Lemma 3.1]{CM}.
\end{proof}

\begin{Theorem}\label{case 4}
Let $\D$ be a pure two-dimensional simplicial complex on $n\geq 6$ vertices. Then  $I_\D^{(m)}=I_\D^m$ for some $m\geq 4$ (or all $m \ge 1$) if and only if $n=6,$ $\overline{\D_1}$ has three disjoint edges and $\D_1$ has two disjoint triangles.
\end{Theorem}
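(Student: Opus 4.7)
The plan is as follows. Sufficiency is immediate from Theorem~\ref{6}, which shows that the condition on $n = 6$ in fact forces $I_\D^{(m)} = I_\D^m$ for every $m \geq 1$. For necessity, assume $I_\D^{(m)} = I_\D^m$ for some $m \geq 4$. By Corollary~\ref{Ramsey}(ii), $\D_1$ is a Ramsey $(4,3)$-graph, so $n \leq 8$ by \cite{GG}, and since Theorem~\ref{6} already handles $n = 6$, the remaining task is to rule out $n \in \{7,8\}$.

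The heart of the argument is the following witness construction. Suppose one can produce a monomial $h \in I_\D^{(4)}$ of degree exactly $7$, and let $g = x_i x_j$ be any quadratic generator of $I_\D$ (such $g$ exists since $\overline{\D_1}$ contains at least one edge whenever $n \geq 7$). Since $g$ corresponds to a $1$-cover of $\D_c$ and sums of covers remain covers, the exponent vector of $h g^{m-4}$ is an $m$-cover, hence $h g^{m-4} \in I_\D^{(m)}$. However, its total degree equals $7 + 2(m-4) = 2m-1$, strictly less than $2m$, the minimum degree of a monomial in $I_\D^m$. Therefore $h g^{m-4} \notin I_\D^m$, contradicting the assumption.

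It remains to exhibit such an $h$ in each remaining sub-case. By Proposition~\ref{Ramsey properties}, either $n = 7$ and $\overline{\D_1}$ has an induced $C_7$, or $\overline{\D_1}$ has an induced $C_5$ (the latter always in the case $n = 8$). In the induced-$C_7$ case the facets of $\D$ are pinned down exactly as in the proof of Theorem~\ref{case 3}, and the all-ones vector $\a = (1,1,1,1,1,1,1)$ is visibly a $4$-cover since every facet has three vertices; I would take $h = x_1 \cdots x_7$. In the induced-$C_5$ case on $\{1,\ldots,5\}$, every facet of $\D$ meets $\{1,\ldots,5\}$ in at most two vertices that form a diagonal of the pentagon, and a direct case analysis on $|F \cap \{1,\ldots,5\}| \in \{0,1,2\}$ shows that the asymmetric vector $\a = (1,1,2,2,1,0,\ldots,0)$ is a $4$-cover; I would take $h = x_1 x_2 x_3^2 x_4^2 x_5$.

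The main obstacle I anticipate is finding the right asymmetric witness in the induced-$C_5$ case. The uniform witness $(x_1 \cdots x_5)^k$ only produces the contradiction for $m$ divisible by $3$, so one must tune the exponent vector so that the maximum weight on a diagonal pair of the pentagon equals $S - 4$ while the total weight $S$ is as small as possible; forcing $S = 7$ then essentially forces the profile $(1,1,2,2,1)$, after which the $4$-cover verification is routine.
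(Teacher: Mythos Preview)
Your proof is correct, but it takes a longer route than the paper's. The paper's proof of necessity is a one-line application of Proposition~\ref{finite}: since $d = \dim\D + 1 = 3$ and $m \geq 4 = d+1$, that proposition yields $n \leq 2d = 6$ directly, so combined with the hypothesis $n \geq 6$ one gets $n = 6$, and Theorem~\ref{6} finishes. You instead pass through the Ramsey bound $n \leq 8$ from Corollary~\ref{Ramsey} and then rule out $n \in \{7,8\}$ by hand via explicit degree-$7$ witnesses in $I_\D^{(4)}$. Your constructions are valid --- in fact $h = x_1\cdots x_7$ works for \emph{any} pure two-dimensional complex on $7$ vertices, so the appeal to the $C_7$ structure there is unnecessary --- and the asymmetric $C_5$ witness $(1,1,2,2,1,0,\dots,0)$ is a nice find: the key point, which you use implicitly, is that no three vertices of the pentagon can be pairwise diagonals, so $|F\cap\{1,\dots,5\}|\le 2$ for every facet $F$. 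What the paper's route buys is brevity and generality, since Proposition~\ref{finite} handles all dimensions with a single degree count. What your route buys is concreteness --- one sees exactly which monomial obstructs the equality --- at the cost of a case analysis that Proposition~\ref{finite} absorbs.
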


\begin{proof}
By Theorem \ref{6} it suffices to show $n = 6$ if $I_\D^{(m)}=I_\D^m$ for some $m\geq 4$. 
But this follows from Proposition \ref{finite}.
\end{proof}

As a consequence, case (ii) of Theorem \ref{case 3} yields a complex with  $I_\D^{(3)}= I_\D^3$ but $I_\D^{(m)}\neq I_\D^m$ for all $m \ge 4$. In general, if $I_\D$ is the edge ideal of a cycle of length $2t+1$, then  $I_\D^{(m)}= I_\D^m$ for $m \le t$ and $I_\D^{(m)}\neq I_\D^m$ for $m \ge t+1$ \cite[Lemma 3.1]{CM}. So we may expect that if $I_\D^{(m)}= I_\D^m$ for some $m \ge \dim\D+2$, then $I_\D^{(m)}= I_\D^m$ for all $m\ge 1$.
\smallskip

On the other hand, we have the following result on the preservation of the equality $I_\D^{(m)}= I_\D^m$.

\begin{Corollary}\label{preservation}
Let $\D$ be a pure two-dimensional simplicial complex.\par
{\rm (i) } If $I_\D^{(m)}=I_\D^m$ for some $m \geq 3$, then $I_\D^{(k)}=I_\D^{k}$ for all $k \le m$. \par
{\rm (ii) } If $I_\D^{(m)}=I_\D^m$ for some $m \ge 4$, then $I_\D^{(m)}= I_\D^{m}$ for all $m \ge 1$. 
\end{Corollary}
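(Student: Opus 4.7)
The strategy is to reduce each statement to the explicit structural classifications already proved in Theorems \ref{5}, \ref{6}, \ref{case 3} and \ref{case 4}, and then observe that, in almost every case, the classification actually produces equality for all $k\ge 1$, from which the preservation of the equality under descent in $k$ is automatic. The argument is a case analysis on the number of vertices~$n$. For $n\le 5$ nothing needs to be done: if $n=3$ then $I_\D=0$; if $n=4$ then $I_\D$ is principal; and if $n=5$ then Theorem \ref{5} already asserts that the equality, once it holds for some $k\ge 2$, holds for all $k\ge 2$.

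For part (ii), the assumption $I_\D^{(m)}=I_\D^m$ with $m\ge 4$ combined with $n\ge 6$ forces $n=6$ together with the two structural conditions (three disjoint nonedges and two disjoint triangles) via Theorem \ref{case 4}. These are the same conditions that appear in Theorem \ref{6}, and that theorem is a biconditional whose conclusion is the stronger statement ``$I_\D^{(k)}=I_\D^k$ for all $k\ge 1$''. Hence no extra work is needed for (ii).

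For part (i), the case $m\ge 4$ is immediately reduced to (ii). So the real content is the case $m=3$, to which I apply Theorem \ref{case 3}. Its alternative (i) ($n=6$) is absorbed into Theorem \ref{6} exactly as in part (ii) and produces equality for every $k\ge 1$. Its alternative (ii) singles out the case $n=7$ where $I_\D$ is the edge ideal of the $7$-cycle; here I invoke \cite[Lemma 3.1]{CM}, which says that for the edge ideal of an odd cycle of length $2t+1$ the equality $I_\D^{(k)}=I_\D^k$ holds for all $k\le t$. Since $t=3$ in our case, this covers exactly the required range $k\le m=3$. Theorem \ref{case 3} also rules out $n\ge 8$, so the case analysis is complete.

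The main (and essentially the only) delicate point is the $7$-cycle case in part (i): this is the unique branch where the hypothesis does \emph{not} force $\D$ into the class where equality holds for all powers, and so the descent from $k=3$ to $k=2$ cannot be read off from the classification theorems of Section 2 alone and must be justified by the external input \cite[Lemma 3.1]{CM}. All other branches are resolved simply by noticing that the biconditional forms of Theorems \ref{5} and \ref{6} already carry the stronger ``for all $k\ge 1$'' conclusion.
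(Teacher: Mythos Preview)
Your proof is correct and follows essentially the same case analysis on $n$ as the paper's own argument. The only difference is in the $n=7$ (seven-cycle) branch of part (i): you invoke \cite[Lemma 3.1]{CM} to obtain $I_\D^{(k)}=I_\D^k$ for all $k\le 3$ at once, whereas the paper instead checks directly that this complex satisfies the four conditions of Theorem \ref{case 2}, thereby obtaining $I_\D^{(2)}=I_\D^2$. Both routes are valid; yours is arguably cleaner since \cite[Lemma 3.1]{CM} is already used in the proof of Theorem \ref{case 3} and yields all needed $k$ simultaneously, while the paper's route keeps the argument self-contained within Section~2.
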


\begin{proof}  
If $n=3,$ $I_\D=0.$  If $n=4,$ $I_\D$ is a principal ideal.  
If $n=5,6$, the statements follows from Theorem \ref{5} and Theorem \ref{6}.  
If $n \geq 7$,  $I_\D^{(m)} \neq I_\D^m$ for $m \ge 4$ by Theorem \ref{case 4}. It remains to check 
whether $I_\D^{(2)}= I_\D^2$ in the case of Theorem \ref{case 3} (ii). 
It is easy to see that this case satisfies the conditions of Theorem \ref{case 2}.
\end{proof}

\section{Cohen-Macaulayness of symbolic and ordinary powers}

We first recall the general characterizations of complexes for which $I_\D^{(m)}$ is a Cohen-Macaulay ideal for some $m \ge 2$. \smallskip

Let $\D$ be a simplicial complex on the vertex set $[n]$. One calls $\D$ a {\it Cohen-Macaulay complex} (over $K$) if $I_\D$ is a Cohen-Macaulay ideal. For $F \in \D$ we set 
$$\lk  F = \{G \in \D|\ G \cap F = \emptyset, G \cup F \in \D\}$$
and call it the link of $F$ in $\D$. It is known that $\D$ is Cohen-Macaulay if and only if the reduced cohomology $\tilde H_j(\lk  F,K) = 0$ for all $F \in \D$, $j < \dim \lk  F$ (see e.g. \cite{BrH}). \smallskip

For every subset $V \subseteq [n]$ we denote by $\D_V$ the subcomplex of $\D$ the facets of which are the facets of $\D$ with at least $|V|-1$ vertices in $V$.

\begin{Theorem}\label{second 1} \cite[Theorem 2.1]{MT2}
$I_\D^{(2)}$ is a Cohen-Macaulay ideal if and only if $\D$ is Cohen-Macaulay and $\D_V$ is Cohen-Macaulay for all subsets $V \subseteq [n]$ with $2 \le |V| \le \dim \D +1$.
\end{Theorem}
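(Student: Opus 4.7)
The plan is to study the Cohen-Macaulayness of $R/I_\D^{(2)}$ by a Hochster-style computation of its local cohomology one multigraded piece at a time and then to match the simplicial complexes that appear with the $\D_V$ of the statement. I would first invoke a formula for $H^i_\mm(R/I_\D^{(m)})_{-\a}$, obtainable either by polarising $I_\D^{(m)}$ to a squarefree monomial ideal (polarisation preserving Cohen-Macaulayness) and applying the classical Reisner--Hochster description, or directly from the decomposition $I_\D^{(m)} = \bigcap_{F \in \F(\D)} P_F^m$ via the \v Cech complex. The upshot is that for each $\a \in \NN^n$ there is an explicit \emph{degree complex} $\D_\a$, built combinatorially from $\D$ and $\a$, such that
$$\dim_K H^i_\mm\bigl(R/I_\D^{(m)}\bigr)_{-\a} \;=\; \dim_K \tilde H_{i-t(\a)-1}(\D_\a; K),$$
where $t(\a)$ is a shift depending only on $\operatorname{supp}\a$. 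Cohen-Macaulayness of $I_\D^{(m)}$ then amounts to vanishing of $\tilde H_j(\D_\a;K)$ in the relevant range for every $\a$.

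Specialising to $m = 2$, I would next use the cover description of Lemma \ref{cover 1} to identify the degree complexes. Only $\a$ with entries in $\{0,1,2\}$ contribute non-trivially, and setting $V := \{i : a_i = 2\}$ one can verify that $\D_\a$ depends essentially on $V$: coordinates $a_i = 1$ outside the relevant region turn $\D_\a$ into a cone (hence contractible), while the remaining combinatorial structure recovers either $\D$ itself (when $V = \emptyset$) or the complex $\D_V$ defined in the statement (when $V \ne \emptyset$). The case $|V|=1$ also collapses to a cone and therefore yields no new condition; the upper bound $|V| \le \dim \D + 1$ comes from dimension bookkeeping on $\D_V$, whose reduced homology in the relevant degrees is forced to vanish automatically once $|V|$ exceeds $\dim \D + 1$.

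The main obstacle will be this dictionary between $\D_\a$ and $\D_V$. Translating the cover-theoretic condition ``$\x^\a$ belongs to $P_F^2$ for every facet $F$'' into the clean facet description of $\D_V$, namely ``$F$ contains at least $|V|-1$ vertices of $V$'', requires patient bookkeeping on exponent vectors, and the contractibility reductions need a handful of small case checks. Once the dictionary is in place, both directions of the equivalence follow symmetrically by Reisner's criterion applied simplex-by-simplex to $\D$ and to each $\D_V$, with the dimension bound $i \le \dim \D$ on the cohomological degree where non-vanishing can occur automatically restricting the range of $|V|$ that matters.
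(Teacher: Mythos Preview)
The paper does not supply a proof of this theorem; it is quoted verbatim from \cite[Theorem 2.1]{MT2} and used as an input. Your outline is in fact a sketch of the method of that reference: one computes the multigraded local cohomology of $R/I_\D^{(m)}$ via a Takayama-type formula, so that each graded piece $H^i_\mm(R/I_\D^{(m)})_\a$ is the reduced homology of a degree complex $\D_\a$, and for $m=2$ one identifies the non-contractible $\D_\a$ with the complexes $\D_V$. So at the level of strategy you are on the same track as the cited source.

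Two caveats, however. First, the grading on local cohomology is over $\ZZ^n$, not $\NN^n$; Takayama's formula involves the support $G$ of the negative part of $\a$ (which must be a face of $\D$), and the resulting degree complex is a subcomplex of $\lk_\D G$ rather than of $\D$ itself. Your restriction to $\a \in \NN^n$ suppresses this, and the ``cone reductions'' you allude to do not automatically dispose of the negative-entry cases. Second, the dictionary you promise between $\D_\a$ and $\D_V$ is the entire content of the theorem: in \cite{MT2} this identification (their Lemma~1.3 and the surrounding analysis) occupies the bulk of the argument, and the bound $|V| \le \dim\D+1$ emerges only after one shows that larger $V$ force $\D_V$ to coincide with $\D_{V'}$ for some smaller $V'$ or to be acyclic in the relevant range. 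Your sketch is directionally sound but understates where the work lies.
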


The Cohen-Macaulayness of all symbolic powers $I_\D^{(m)}$ can be characterized by means of matroid complexes.
Recall that a {\it matroid complex} is a collection of subsets of a finite set, called {\it independent sets}, with the following properties: \smallskip

(i) The empty set is independent.\par
(ii) Every subset of an independent set is independent. \par
(iii) If $F$ and $G$ are two independent sets and $F$  has more elements than $G$, then there exists an element in $F$ which is not in $G$ that when added to $G$ still gives an independent set. 

\begin{Theorem}\label{all} \cite[Theorem 3.5]{MT2}
$I_\D^{(m)}$ is a Cohen-Macaulay ideal for all $m \ge 1$ if and only if $\D$ is a matroid complex.
\end{Theorem}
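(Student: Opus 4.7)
The plan is to reduce both implications to the combinatorial characterization of matroids as pure simplicial complexes every link of which is again pure, bootstrapping from the case $m=2$ supplied by Theorem \ref{second 1}.

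For the sufficiency, assume that $\D$ is a matroid. Then $\D$ itself is Cohen--Macaulay by Bj\"orner's theorem, which handles the case $m=1$. For $m\ge 2$, I would compute the local cohomology of $R/I_\D^{(m)}$ via the Takayama-type formula
$$\dim_K H^i_\mm\bigl(R/I_\D^{(m)}\bigr)_\aa \;=\; \dim_K\tilde H^{i-1}\!\bigl(\D_\aa^{(m)};\,K\bigr),$$
where the degree complex $\D_\aa^{(m)}$ is read off from the $m$-cover description $I_\D^{(m)}=\bigcap_{F\in\F(\D)}P_F^m$. Using the matroid exchange axiom, one checks that $\D_\aa^{(m)}$ is either acyclic or isomorphic to $\lk_\D G$ for an appropriate face $G$ determined by $\aa$. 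Since links of matroids are matroids and hence Cohen--Macaulay, the reduced cohomology vanishes outside top degree, and the required Cohen--Macaulayness of $R/I_\D^{(m)}$ follows in every codimension.

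For the necessity I would proceed by induction on $\dim\D$. Theorem \ref{second 1} applied to $m=2$ ensures that $\D$ itself is pure and Cohen--Macaulay. To conclude that $\D$ is a matroid it then suffices to show that $\lk_\D v$ is a matroid for every vertex $v$. Using the description $I_\D^{(m)}=\bigcap_{F\in\F(\D)}P_F^m$ one obtains the identity
$$\bigl(I_\D^{(m)}:x_v^\infty\bigr)\Big|_{x_v=1}\;=\;I_{\lk_\D v}^{(m)}$$
in the polynomial ring on the variables $x_i$ with $i\ne v$, where the left-hand operations consist of localizing at $x_v$ and then killing the regular element $x_v-1$. Both steps preserve the Cohen--Macaulay property, so all symbolic powers of $I_{\lk_\D v}$ remain Cohen--Macaulay, and the inductive hypothesis forces $\lk_\D v$ to be a matroid.

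The main obstacle is the combinatorial analysis of the degree complex $\D_\aa^{(m)}$ in the sufficiency direction: the matroid exchange axiom must be invoked delicately to reduce $\D_\aa^{(m)}$ to a link, and for multidegrees outside the Cohen--Macaulay locus the reduction should produce a contractible (not just Cohen--Macaulay) complex. A potential alternative is to exhibit a shelling of the Alexander dual of a polarization of $I_\D^{(m)}$, but arranging such a shelling uniformly in $m$ is itself a substantial combinatorial task.
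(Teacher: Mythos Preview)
The paper does not prove this theorem; it is quoted from \cite{MT2} and used as a black box, so there is no proof here to compare against. That said, your proposal has a genuine gap in the necessity direction that would prevent it from succeeding regardless of what \cite{MT2} does.

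Your inductive step asserts that once $\D$ is known to be pure and Cohen--Macaulay, it suffices to show that every vertex link $\lk_\D v$ is a matroid. This implication is false. Take $\D$ to be the $5$-cycle $C_5$ on vertices $1,\dots,5$. It is pure of dimension $1$ and Cohen--Macaulay (being connected), and for each vertex $v$ the link $\lk_\D v$ consists of two isolated points, which is the uniform matroid of rank $1$ on two elements. Yet $C_5$ is not a matroid: the edge $\{1,2\}$ and the vertex $\{4\}$ violate the exchange axiom, since neither $\{1,4\}$ nor $\{2,4\}$ is an edge. Thus ``pure Cohen--Macaulay with all vertex links matroids'' does not force $\D$ to be a matroid, and your induction collapses at this point. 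The localization identity you wrote down is correct and does show that each $I_{\lk_\D v}^{(m)}$ is Cohen--Macaulay, but that information alone is not enough.

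What is missing is a way to extract the exchange property directly from the Cohen--Macaulayness of \emph{all} symbolic powers, not just from passage to links. The argument in \cite{MT2} (and the use of Theorem~\ref{second 1} here) suggests that the auxiliary complexes $\D_V$ carry the relevant obstruction: when the exchange axiom fails for a specific pair of faces, one should locate a subset $V$ and a power $m$ for which $\D_V$ (or the appropriate degree complex in the Takayama formula) fails to be Cohen--Macaulay. Your sufficiency sketch via degree complexes is on the right track and is essentially the method of \cite{MT2}, but the necessity direction needs a different idea than reduction to links.
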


If  $\dim \D = 2$, we can make these characterisations more precise. For that we shall need the following observation on the Cohen-Macaulayness of the union of two Cohen-Macaulay complexes.

\begin{Lemma}\label{union}
Let $\G_1$ and $\G_2$ be two Cohen-Macaulay complexes with $\dim \G_1 = \dim \G_2 = d \ge 1$. Then $\G_1 \cup \G_2$ is Cohen-Macaulay iff $\depth k[\G_1 \cap \G_2] \ge d$.
\end{Lemma}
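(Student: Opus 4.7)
The plan is to use the Mayer--Vietoris short exact sequence
\[
0 \to k[\G_1 \cup \G_2] \to k[\G_1] \oplus k[\G_2] \to k[\G_1 \cap \G_2] \to 0,
\]
which comes from the identities $I_{\G_1 \cup \G_2} = I_{\G_1} \cap I_{\G_2}$ and $I_{\G_1 \cap \G_2} = I_{\G_1} + I_{\G_2}$ for the Stanley--Reisner ideals. First I would note that since $\dim \G_1 = \dim \G_2 = d$, we have $\dim k[\G_1] = \dim k[\G_2] = \dim k[\G_1 \cup \G_2] = d+1$, while $\dim k[\G_1 \cap \G_2] \le d$.

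Next I would apply the local cohomology functor $H^\bullet_\mm(-)$ with respect to the graded maximal ideal $\mm$ of the polynomial ring. By the Cohen--Macaulay hypothesis on $\G_1$ and $\G_2$, the cohomologies $H^i_\mm(k[\G_1])$ and $H^i_\mm(k[\G_2])$ vanish for $i \ne d+1$. Also $H^{d+1}_\mm(k[\G_1 \cap \G_2]) = 0$ since $\dim k[\G_1 \cap \G_2] \le d$. Plugging this into the long exact sequence of local cohomology yields
\[
H^{i}_\mm(k[\G_1 \cup \G_2]) \;\cong\; H^{i-1}_\mm(k[\G_1 \cap \G_2]) \qquad \text{for all } i \le d,
\]
while $H^{d+1}_\mm(k[\G_1 \cup \G_2])$ fits into a right-exact sequence ending at zero that does not affect the vanishing criteria we need.

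From these isomorphisms the equivalence is immediate: $k[\G_1 \cup \G_2]$ is Cohen--Macaulay iff $H^i_\mm(k[\G_1 \cup \G_2]) = 0$ for all $i \le d$, which is equivalent to $H^j_\mm(k[\G_1 \cap \G_2]) = 0$ for all $j \le d-1$, i.e.\ $\depth k[\G_1 \cap \G_2] \ge d$. There are no real obstacles; the only subtle point is verifying that the vanishing index ranges line up correctly, in particular that the gap between $\dim k[\G_1 \cap \G_2] \le d$ and $\dim k[\G_i] = d+1$ lets us pass cleanly from the long exact sequence to isomorphisms in the relevant degrees.
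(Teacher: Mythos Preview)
Your proof is correct and follows essentially the same approach as the paper: both arguments rest on the Mayer--Vietoris short exact sequence $0 \to k[\G_1 \cup \G_2] \to k[\G_1] \oplus k[\G_2] \to k[\G_1 \cap \G_2] \to 0$ and the fact that $\depth k[\G_1] = \depth k[\G_2] = d+1$. The paper simply states that the equivalence follows from this sequence (implicitly invoking the standard depth lemma for short exact sequences), whereas you spell out the same conclusion explicitly via the long exact sequence in local cohomology; this is just a more detailed rendering of the same idea.
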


\begin{proof} 
The assertion follows from the exact sequence
$$0 \to k[\G_1 \cup \G_2] \to k[\G_1]\oplus k[\G_2] \to k[\G_1 \cap \G_2] \to 0.$$
In fact, we have $\depth k[\G_1] = \depth k[\G_2] = d+1$ by the assumption. Therefore, $\depth k[\G_1 \cup \G_2] = d+1$ if and only if $\depth k[\G_1 \cap \G_2] \ge d$.
\end{proof}

Note that the condition $\depth k[\G_1 \cap \G_2] \ge 2$ just means that $\G_1 \cap \G_2$ is connected and $\dim \G_1 \cap \G_2 \ge 1$. \smallskip

For $F \in \D$ we denote by $\star F$ the subcomplex of $\D$ generated by the facets containing $F$ and call it the star of $F$ in $\D$. 
It is easy to see that $\D_V$ is the union of the stars of the faces of $\D$ with $|V|-1$ vertices in $V$.

\begin{Theorem}\label{second 2}
Let $\dim \D = 2$. Then $I_\D^{(2)}$ is a Cohen-Macaulay ideal if and only if the following conditions are satisfied:\par
{\rm (i) } $\D$ is Cohen-Macaulay,\par
{\rm (ii)} For every pair of vertices $u,v$, $\star\{u\} \cap \star\{v\}$ is a connected complex with dimension $\ge 1$,\par
{\rm (iii) } For every triple of vertices $u,v,w$ such that $\{u,v\},\{u,w\} \in \D$ and $\{v,w\} \not\in \D$, there exist a vertex $t$ such that $\{u,v,t\}, \{u,w,t\} \in \D$,\par
{\rm (iv)} For every triple of vertices $u,v,w$ such that $\{u,v\},\{u,w\},\{v,w\} \in \D$, $\{u,v,w\}$ $\in \D$ or there is a vertex $t$ such that $\{u,v,t\}, \{u,w,t\},\{v,w,t\} \in \D$.
\end{Theorem}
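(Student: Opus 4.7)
The plan is to invoke Theorem~\ref{second 1}, which reduces the Cohen-Macaulayness of $I_\D^{(2)}$ to the statement that $\D$ and every $\D_V$ with $|V|\in\{2,3\}$ are Cohen-Macaulay. The first of these requirements is condition (i); the other two will account for (ii), (iii) and (iv). The starting observation is that the facets of $\D_V$ are the facets of $\D$ meeting $V$ in at least $|V|-1$ vertices, so for $V=\{u,v\}$ one has $\D_V=\star\{u\}\cup\star\{v\}$, and for $V=\{u,v,w\}$ one has $\D_V=\star\{u,v\}\cup\star\{u,w\}\cup\star\{v,w\}$, with the convention that the star of an edge not in $\D$ is empty. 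When $\D$ is Cohen-Macaulay each star $\star F$ is the join $F\ast\lk F$ of a simplex with a Cohen-Macaulay link, so it is itself Cohen-Macaulay of dimension $2$, which lets me feed these pieces into Lemma~\ref{union} with $d=2$; there the required condition $\depth k[\G_1\cap\G_2]\ge 2$ simplifies to ``$\G_1\cap\G_2$ is connected of dimension at least $1$''.

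For $|V|=2$, Lemma~\ref{union} applied to $\star\{u\}\cup\star\{v\}$ yields condition (ii) verbatim.

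For $|V|=3$, I would split on the number of edges of $V$ that belong to $\D$. If at most one edge lies in $\D$, then $\D_V$ is empty or a single star, hence trivially Cohen-Macaulay. If exactly two edges lie in $\D$, say $\{u,v\},\{u,w\}\in\D$ and $\{v,w\}\notin\D$, then a direct computation using $\{u,v,w\}\notin\D$ shows that $\star\{u,v\}\cap\star\{u,w\}$ is the subcomplex consisting of the vertex $\{u\}$ together with exactly those edges $\{u,t\}$ for which $\{u,v,t\},\{u,w,t\}\in\D$; connectedness of dimension at least $1$ is then equivalent to (iii). If all three edges of $V$ lie in $\D$, I apply Lemma~\ref{union} iteratively: first to $\star\{u,v\}\cup\star\{u,w\}$, whose intersection is analyzed as above (and is Cohen-Macaulay whenever either $\{u,v,w\}\in\D$ or a common vertex $t$ exists, both covered by (iv)), and then to $\bigl(\star\{u,v\}\cup\star\{u,w\}\bigr)\cup\star\{v,w\}$. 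The second intersection decomposes as $\bigl(\star\{u,v\}\cap\star\{v,w\}\bigr)\cup\bigl(\star\{u,w\}\cap\star\{v,w\}\bigr)$, which always contains the vertices $v$ and $w$, contains edges $\{v,s\}$ exactly when $\{u,v,s\},\{v,w,s\}\in\D$, edges $\{w,s\}$ exactly when $\{u,w,s\},\{v,w,s\}\in\D$, and contains the edge $\{v,w\}$ (and the triangle $\{u,v,w\}$) precisely when $\{u,v,w\}\in\D$; it is connected of dimension at least $1$ exactly when either $\{u,v,w\}\in\D$ (so $\{v,w\}$ joins $v$ and $w$) or there is a vertex $t$ producing the three triangles $\{u,v,t\},\{u,w,t\},\{v,w,t\}$ (providing a length-$2$ path $v$–$t$–$w$). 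This is precisely condition (iv).

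The main obstacle I anticipate is the careful bookkeeping in the three-edge case, where the iterated application of Lemma~\ref{union} forces me to keep track of the subcases $\{u,v,w\}\in\D$ and $\{u,v,w\}\notin\D$ at every step and to identify exactly which triangles through $u,v,w$ contribute to each piece of the two successive intersections. The necessity direction—showing that if (iii) or (iv) fails, the relevant intersection disconnects or collapses to dimension zero, forcing $\D_V$ and hence $I_\D^{(2)}$ to be non-Cohen-Macaulay—should follow routinely once the combinatorial description of the intersections is nailed down.
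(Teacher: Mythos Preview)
Your approach matches the paper's: reduce via Theorem~\ref{second 1} and analyze the decompositions $\D_V=\star\{u\}\cup\star\{v\}$ and $\D_V=\star\{u,v\}\cup\star\{u,w\}\cup\star\{v,w\}$ through Lemma~\ref{union}. For sufficiency, and for the necessity of (ii) and (iii), your argument is essentially identical to the paper's.

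There is one point you gloss over in the necessity of (iv). You want to argue: if (iv) fails, then $\bigl(\star\{u,v\}\cup\star\{u,w\}\bigr)\cap\star\{v,w\}$ has depth~$<2$, forcing $\D_V$ to be non-Cohen-Macaulay. But Lemma~\ref{union} as stated requires \emph{both} pieces to be Cohen-Macaulay, and when (iv) fails the intermediate union $\G_1=\star\{u,v\}\cup\star\{u,w\}$ need not be Cohen-Macaulay (its link at $u$ can be disconnected if no $t$ with $\{u,v,t\},\{u,w,t\}\in\D$ exists). So the iterated application does not go through verbatim. The fix is easy: every facet of $\G_1$ contains $u$, so $\G_1$ is a cone, hence connected, hence $\depth k[\G_1]\ge 2$; then the depth lemma applied directly to the Mayer--Vietoris sequence (rather than Lemma~\ref{union} as stated) still yields $\depth k\bigl[\G_1\cap\star\{v,w\}\bigr]\ge 2$ whenever $\D_V$ is Cohen-Macaulay, and your combinatorial description of that intersection then gives (iv). The paper sidesteps this wrinkle by arguing the necessity of (iv) topologically instead: when $\{u,v,w\}\notin\D$ and no common $t$ exists, the three contractible stars have pairwise nonempty but triply empty intersection, so $|\D_V|$ has the homotopy type of a circle and fails Reisner's criterion.
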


\begin{proof}
Assume that $\D$ satisfies the above conditions. By Theorem \ref{second 1},  $I_\D^{(2)}$ is a Cohen-Macaulay ideal if $\D_V$ is Cohen-Macaulay for all subsets $V \subseteq [n]$ with $2 \le |V| \le 3$. \par

If $V = \{u,v\}$, then $\D_V = \star\{u\} \cup \star\{v\}$. It is well known that the star of every face of a Cohen-Macaulay complex is Cohen-Macaulay. Therefore, we may apply Lemma \ref{union} to see that (i) and (ii) imply the Cohen-Macaulayness of $\D_V$.\par

If $V = \{u,v,w\}$, we may assume that $V$ has at least an edge in $\D$.  If $V$ has only an edge in $\D$, say $\{u,v\}$, then $\D_V = \star\{u,v\}$, which is Cohen-Macaulay by (i). 
If $V$ has two edges in $\D$, say $\{u,v\},\{u,w\}$, then $\D_V = \star\{u,v\}\cup \star\{u,w\}$.
Hence we can use Lemma \ref{union} to show that $\D_V$ is Cohen-Macaulay. Since $\star\{u,v\}$ and $\star\{u,w\}$ are Cohen-Macaulay, it suffices to show that $\star\{u,v\}\cap \star\{u,w\}$ is connected with dimension $\ge 1$. The connectedness follows from the fact that every face of $\star\{u,v\}\cap\, \star\{u,w\}$ contains $u$. By (iii), $\star\{u,v\}\cap\, \star\{u,w\}$ contains $\{u,t\}$, hence it has dimension $\ge 1$.
If $V$ has three edges in $\D$, then $\D_V = \star\{u,v\}\cup \star\{u,w\} \cup \star\{v,w\}$. Using Lemma \ref{union} and (iv) we can show similarly that $\star\{u,w\}\cup \star\{v,w\}$ is Cohen-Macaulay. Moreover,  $\star\{u,v\}\cap (\star\{u,w\} \cup \star\{v,w\})$ contains the facet $\{u,v,w\}$ or the edges $\{u,t\},\{v,t\}$. From this it follows that this complex is connected with dimension $\ge 1$. By Lemma \ref{union},  this implies the Cohen-Macaulayness of $\star\{u,v\}\cup \star\{u,w\} \cup \star\{v,w\}$.\smallskip

For the converse, assume that  $I_\D^{(2)}$ is a Cohen-Macaulay ideal. 
By Theorem \ref{second 1}, $\D$ is Cohen-Macaulay and $\D_V$ is Cohen-Macaulay for all subsets $V \subseteq [n]$ with $2 \le |V| \le 3$.  \par
If $V = \{u,v\}$, then  $\D_V = \star\{u\} \cup \star\{v\}$. Since $\D$ is Cohen-Macaulay, $\star\{u\}$ and $\star\{v\}$ are Cohen-Macaulay. By Lemma \ref{union}, the Cohen-Macaulayness of $\star\{u\} \cup \star\{v\}$ implies that $\star\{u\} \cap \star\{v\}$  is connected with dimension $\ge 1$.\par
If $V = \{u,v,w\}$ and $\{u,v\},\{u,w\} \in \D$ but $\{v,w\} \not\in \D$, then $\D_V = \star\{u,v\}\cup\, \star\{u,w\}$. 
Since $\star\{u,v\}$, $\star\{u,w\}$ and $\D_V$ are Cohen-Macaulay, we can use Lemma \ref{union} to deduce that $\star\{u,v\}\cap \star\{u,w\}$ is of dimension $\ge 1$. Since $u$ belongs to every face  of $\star\{u,v\}\cap \star\{u,w\}$, this complex must contain at least an edge, say $\{u,t\}$. Then  $\{u,v,t\}, \{u,w,t\} \in \D$. \par
If $V = \{u,v,w\}$  and $\{u,v\},\{u,w\},\{v,w\} \in \D$, $\D_V =  \star\{u,v\}\cup \star\{u,w\} \cup \{v,w\}$. Assume that $\{u,v,w\} \notin \D$.  If there doesn't exist any vertex $t$ such that $\{u,v,t\}, \{u,w,t\},\{v,w,t\} \in \D$, the geometric realization of $\D_V$ is homeomorphic to the triangle of the 
 edges $\{u,v\},\{u,w\},\{v,w\}$.  By \cite[Corollary 5.4.6]{BrH}, this implies that $\D_V$ is not Cohen-Macaulay, a contradiction.
\end{proof}

For $m \ge 2$ we have the following immediate consequence of Theorem \ref{all}.

\begin{Corollary} \label{matroid}
Let $\dim \D = 2$. Then $I_\D^{(m)}$ is a Cohen-Macaulay ideal for all $m \ge 1$ if and only if the following conditions are satisfied:\par
{\rm (i)} For every vertex $u$ and every edge $\{v,w\}$ not containing $u$ in $\D$, $\{u,v\} \in \D$ or $\{u,w\} \in \D$,\par
{\rm (ii)} For every face $\{u,v\}$ and every facet $\{u,w,t\}$ in $\D$, $\{u,v,w\} \in \D$ or $\{u,v,t\} \in \D$.
\end{Corollary}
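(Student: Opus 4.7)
The plan is to apply Theorem \ref{all}, which asserts that $I_\D^{(m)}$ is Cohen-Macaulay for all $m \ge 1$ if and only if $\D$ is a matroid complex. Thus it suffices to show that, for a complex $\D$ with $\dim\D=2$, being a matroid is equivalent to conditions (i) and (ii).

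The forward direction should be immediate: conditions (i) and (ii) are exactly two special instances of the matroid exchange axiom, namely with $(F,G) = (\{v,w\},\{u\})$ where $u \notin F$, and with $(F,G)=(\{u,w,t\},\{u,v\})$ where $|F \cap G|=1$.

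For the converse, I would assume (i) and (ii) and first verify that $\D$ is pure of dimension $2$. Since $\dim\D=2$, a triangle facet $\{a,b,c\}$ exists. Applying (i) to any vertex $v \notin \{a,b,c\}$ and the edge $\{a,b\}$ rules out isolated vertices. To rule out an edge facet $\{u,v\}$ that is not contained in any triangle, I would argue: if one of $u,v$ lies in a triangle facet, (ii) immediately puts $\{u,v\}$ in a triangle; otherwise, (i) produces an edge from $u$ to a vertex $a$ of a triangle facet $\{a,b,c\}$, (ii) applied to $\{u,a\}$ and $\{a,b,c\}$ produces a triangle containing $u$, and a final use of (ii) forces $\{u,v\}$ into a triangle, contradicting the assumption in each case.

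With purity in hand, I would verify the exchange property for all faces $F,G\in\D$ with $|F|>|G|$ by case analysis on $|F|$, $|G|$, and $|F\cap G|$. The cases $G \subseteq F$ or $|G|=0$ are trivial. The case $|F|=2$, $|G|=1$ with $G\not\subseteq F$ is exactly (i). When $|F|=3$ and $|G|=1$ with $G\not\subseteq F$, I would select any edge of $F$ and apply (i). The case $|F|=3$, $|G|=2$ with $|F\cap G|=1$ is exactly (ii). The main obstacle is the remaining case $|F|=3$, $|G|=2$, $F\cap G=\emptyset$, which requires chaining (i) and (ii). Writing $F=\{u,w,t\}$ and $G=\{a,b\}$, I would first use (i) on the vertex $u$ and the edge $\{a,b\}$ to obtain, say, $\{u,a\}\in\D$; then use (ii) on the face $\{u,a\}$ and the facet $\{u,w,t\}$ to obtain a triangle such as $\{u,a,w\}\in\D$; and finally use (ii) on the face $\{a,b\}$ and the facet $\{u,a,w\}$ to conclude that $\{a,b,u\}$ or $\{a,b,w\}$ belongs to $\D$, producing the required element of $F$ to adjoin to $G$ and completing the verification of the matroid axioms.
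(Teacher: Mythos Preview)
Your argument is correct, but it takes a different route from the paper's. The paper's proof is a two-line appeal to \cite[Theorem~39.1]{Sch}, which characterizes matroids by the condition that for every pair of faces $I,J$ with $|I\setminus J|=1$ and $|J\setminus I|=2$ there is some $x\in J\setminus I$ with $I\cup\{x\}$ a face; since $\dim\D=2$ forces $|J|\le 3$, the two possible values of $|I\cap J|$ give exactly conditions (i) and (ii). You instead work directly with the standard augmentation axiom: you first extract purity from (i)--(ii), and then verify augmentation by case analysis, the nontrivial disjoint case $|F|=3$, $|G|=2$ being handled by chaining (i) once and (ii) twice. Your approach is more elementary and self-contained, effectively reproving the relevant instance of Schrijver's theorem; the paper's approach is shorter precisely because that chaining argument is hidden inside the cited reference. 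One minor remark: in your purity step, once you produce a triangle of $\D$ containing $u$ in the ``otherwise'' branch you already contradict the hypothesis that $u$ lies in no triangle facet, so the ``final use of (ii)'' there is superfluous (though harmless).
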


\begin{proof} 
By \cite[Theorem 39.1]{Sch}, $\D$ is a matroid complex iff for every pair of faces $I$ and $J$ with $|I \setminus J| = 1$ and $|J\setminus I| = 2$, there is a vertex $x \in J \setminus I$ such that $I \cup\{x\}$ is a face of $\D$. Since $\dim \D =2$, $|J| \le 3$ so that we obtain conditions (i) and (ii).
\end{proof}

Now we will combine the results on the equality $I_\D^{(m)} = I_\D^m$ and the above characterizations of the Cohen-Macaulayness of $I_\D^{(m)}$ to see when $I_\D^m$ is Cohen-Macaulay for each $m \ge 2$. For that we need the following observation.

\begin{Lemma}\label{link} 
Let $\D$ be a simplicial complex and $F$ a face of $\D$.
If $I_\D^mR[x_i^{-1}|\ i \in F]$ is Cohen-Macaulay, then $I_{\lk F}^m$ is Cohen-Macaulay.
\end{Lemma}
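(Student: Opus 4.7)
The strategy is to localize $R/I_\D^m$ at the variables $x_i$ for $i \in F$, identify the result with a Laurent polynomial extension of a suitable quotient ring, and then extract $R''/I_{\lk F}^m$ (where $R'' = K[x_j : j \in V(\lk F)]$) as a module-finite direct summand.

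First I would analyze the localization. Let $S$ be the multiplicative set generated by $\{x_i : i \in F\}$ and $R' = K[x_j : j \in [n]\setminus F]$, so that $R[S^{-1}] = R'[x_i, x_i^{-1} : i \in F]$. In the decomposition $I_\D = \bigcap_{G \in \F(\D)} P_G$, each $P_G$ with $F \not\subseteq G$ contains some $x_i$ with $i \in F$ and thus becomes the unit ideal after inverting $S$; the surviving primes correspond to facets $G \supseteq F$ of $\D$, and each such $P_G$ already lies in $R'$. Setting $W = ([n]\setminus F)\setminus V(\lk F)$, abbreviating $(x_W) := (x_j : j \in W)$, and writing $G = F \cup G'$ with $G' \in \F(\lk F)$, we have $P_G \cap R' = (x_W) + (x_j : j \in V(\lk F)\setminus G')$. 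Intersecting over the facets $G'$ of $\lk F$, the two summands involve disjoint variable sets and the intersection splits as
$$I_\D R[S^{-1}] = J\cdot R[S^{-1}],\qquad J := I_{\lk F} + (x_W) \subseteq R' = R''[x_j : j \in W].$$
Taking $m$-th powers gives $R[S^{-1}]/I_\D^m R[S^{-1}] \cong (R'/J^m)[x_i, x_i^{-1} : i \in F]$, and since the Laurent polynomial extension is faithfully flat with regular (hence Cohen--Macaulay) fibers, the hypothesis forces $R'/J^m$ to be Cohen--Macaulay.

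Next I would exhibit $R''/I_{\lk F}^m$ as a module-finite direct summand of $R'/J^m$. Expanding $J^m = (I_{\lk F} + (x_W))^m$, every nontrivial cross term with a factor from $(x_W)$ contains at least one variable $x_j$ with $j \in W$, so $R'' \cap J^m = I_{\lk F}^m$. Hence the inclusion $R''\hookrightarrow R'$ descends to an injection $R''/I_{\lk F}^m \hookrightarrow R'/J^m$, split by the retraction $x_j \mapsto 0$ for $j \in W$. Furthermore $(x_W)^m \subseteq J^m$ implies that $R'/J^m$ is generated over $R''/I_{\lk F}^m$ by the (finitely many) monomials in the variables $x_j$, $j\in W$, of total degree $< m$; in particular the extension is module-finite.

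Finally I would transfer Cohen--Macaulayness by two standard facts. Under a finite extension of graded-local Noetherian rings $A := R''/I_{\lk F}^m \hookrightarrow B := R'/J^m$, the maximal ideals have the same radical, so depth and dimension of $B$ as an $A$-module coincide with those as a $B$-module; hence $B$ Cohen--Macaulay as a ring implies $B$ Cohen--Macaulay as an $A$-module. A direct summand of a Cohen--Macaulay module is Cohen--Macaulay, since $\dim(N\oplus N') = \max(\dim N,\dim N')$ together with $\depth(N\oplus N') = \min(\depth N,\depth N')$ forces both summands to be Cohen--Macaulay of dimension equal to the whole. Applied to the summand $R''/I_{\lk F}^m$ of $R'/J^m$, this gives Cohen--Macaulayness of $R''/I_{\lk F}^m$, i.e., $I_{\lk F}^m$ is a Cohen--Macaulay ideal. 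The main technical hurdle is the combinatorial bookkeeping that yields the splitting $I_\D R[S^{-1}] = J R[S^{-1}]$ together with the intersection identity $R'' \cap J^m = I_{\lk F}^m$; the remaining algebraic transfer steps are standard.
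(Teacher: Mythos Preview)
Your proof is correct and follows essentially the same route as the paper's own argument: localize at the variables in $F$, identify $I_\D^m$ in the localization with $(I_{\lk F}+(x_W))^m$ extended from the polynomial ring $R'$ in the variables outside $F$, descend Cohen--Macaulayness to $R'/J^m$, and then recover $R''/I_{\lk F}^m$ as an $R''$-module direct summand. Your write-up is somewhat more explicit than the paper's (you spell out the faithfully flat descent along the Laurent extension, the module-finiteness via $(x_W)^m\subseteq J^m$, and the depth/dimension bookkeeping for summands), but the underlying strategy is the same.
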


\begin{proof}
Let $Y$ denote the set of the variables $x_i$ such that $i \not\in F$ and $i$ is not a vertex of $I_{\lk F}$.
It is easy to see that $I_\D^m R[x_i^{-1}|\ i \in F] = (I_{\lk G},Y)^mR[x_i^{-1}|\ i \in F]$.
Let $S$ be the polynomial ring over $K$ in the variables $x_i$, where $i$ is a vertex of $\lk F$. 
Then $I_{\lk F}$ is an ideal in $S$.
Let  $T$ be the polynomial ring over $K$ in the variables $x_i$, $i \not\in F$. Then $T = S[Y]$ and
$R[x_i^{-1}|\ i \in F] = T[x_i^{\pm 1}|\ i\in F]$. Hence, $(I_{\lk G},Y)^mT$ is Cohen-Macaulay.
On the other hand, $(I_{\lk G},Y)^mT = I_{\lk G}^mT + I_{\lk G}^{m-1}(Y)T + \cdots + (Y)^mT$.
From this it follows that $S/ I_{\lk G}^m$ is a direct summand of $T/(I_{\lk G},Y)^mT$ as an $S$-module.
Therefore,  $I_{\lk F}^m$ is Cohen-Macaulay.
\end{proof}

We shall also need the following description of one-dimensional complexes for which $I_\D^m$ is Cohen-Macaulay.

\begin{Lemma}\label{dim 1} \cite[Corollary 3.4 and Corollary 3.5]{MT1}
Let $\dim \D = 1$. Then \par
{\rm (i)} $I_\D^2$ is Cohen-Macaulay if and only if $\D$ is a path of length 2 or a cycle of length 4 or 5. \par
{\rm (ii)} $I_\D^m$ is Cohen-Macaulay for some $m \ge 3$ if and only if $\D$ is a path of length 2 or a cycle of length 4.
\end{Lemma}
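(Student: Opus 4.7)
The plan rests on the standard split that $I_\D^m$ is Cohen-Macaulay if and only if $I_\D^{(m)}=I_\D^m$ and $I_\D^{(m)}$ is Cohen-Macaulay, since a Cohen-Macaulay ideal is unmixed and the unmixed part of $I_\D^m$ coincides with $I_\D^{(m)}$. For $\dim\D=1$ one then combines the separate combinatorial characterisations of these two properties.

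The first step is to apply Lemma \ref{link} at each vertex $v$: the link is the $0$-dimensional complex on the neighborhood $N(v)$, and $I_{\lk v}$ is generated by all products $x_ix_j$ with distinct $i,j\in N(v)$. If $|N(v)|\ge 3$, then $\prod_{i\in N(v)}x_i$ lies in $I_{\lk v}^{(2)}\setminus I_{\lk v}^2$, and one verifies directly that the irrelevant maximal ideal of the corresponding subring is an embedded associated prime of $I_{\lk v}^2$. Hence $I_{\lk v}^m$ fails Cohen-Macaulayness for every $m\ge 2$, so every vertex of $\D$ has degree at most $2$. Purity and connectedness of $\D$, both forced by Cohen-Macaulayness of $R/I_\D^m$, then reduce $\D$ to a single path $P_k$ or a single cycle $C_k$.

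Next I would enumerate. Since $\D$ is triangle-free in all remaining cases, $I_\D$ coincides with the edge ideal of the complement graph $\overline\D$, and the pigeon-hole argument of Proposition \ref{graph} shows that an induced odd cycle of length $2t+1$ in $\overline\D$ obstructs $I_\D^{(m)}=I_\D^m$ for every $m\ge t+1$. For $\D=C_k$ with $k\ge 6$ the vertices $\{1,3,5\}$ are pairwise non-adjacent in $\D$, producing a triangle in $\overline\D$ and killing the equality already at $m=2$. Among the remaining candidates the link-type criterion for Cohen-Macaulayness of $I_\D^{(m)}$ (the one-dimensional analogue of Theorem \ref{second 1}, requiring essentially that $\star u\cap \star v$ be connected for every pair $u,v$) excludes every path $P_k$ with $k\ge 3$, while $P_2$ gives a principal ideal and $C_4$ gives a complete intersection, both Cohen-Macaulay in all powers. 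Finally $C_5$ is self-complementary and its single odd induced $5$-cycle permits $I_\D^{(m)}=I_\D^m$ only for $m\le 2$; a direct Reisner-type check then confirms $I_{C_5}^{(2)}$ is Cohen-Macaulay. This settles (i), and (ii) follows since $C_5$ is further eliminated as soon as $m\ge 3$.

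The main obstacle is the precise separation between (i) and (ii) at $m=3$: one must verify that the pentagon $C_5$ indeed fails $I^{(3)}=I^3$ via the $3$-cover $(1,2,1,2,1)$ (a variant of the argument in Proposition \ref{graph}), while simultaneously confirming the Cohen-Macaulayness of $I_{C_5}^{(2)}=I_{C_5}^2$ by a direct primary decomposition. The elimination of longer paths and cycles reduces to a finite combinatorial check once the degree bound at each vertex is in place.
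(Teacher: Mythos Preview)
The paper does not prove this lemma at all; it is quoted as \cite[Corollary 3.4 and Corollary 3.5]{MT1} and used as a black box. Your sketch is therefore not a comparison target but a self-contained reconstruction, and the overall strategy---localise at a vertex via Lemma~\ref{link} to bound degrees, reduce to paths and cycles, then separate the survivors using Lemma~\ref{nontriangle}, Theorem~\ref{second 1} and the odd-cycle obstruction---is sound.

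There is one concrete error. Your witness $\prod_{i\in N(v)}x_i$ does \emph{not} lie in $I_{\lk v}^{(2)}\setminus I_{\lk v}^2$ once $|N(v)|\ge 4$: with four neighbours one has $x_1x_2x_3x_4=(x_1x_2)(x_3x_4)\in I_{\lk v}^2$. The repair is immediate---take any three neighbours $i,j,k$ and use $x_ix_jx_k$, which has degree $3$ yet lies in every $P_{\{l\}}^2$---so the conclusion (the maximal ideal is associated to $I_{\lk v}^2$) survives. Two smaller points: your appeal to Proposition~\ref{graph} is only heuristic, since that proposition concerns odd cycles in $\D_c$, not in $\overline\D$; what you actually need for $C_5$ is the edge-ideal result of \cite{CM} (and indeed you produce the correct cover $(1,2,1,2,1)$ at the end). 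Also, your enumeration silently skips $C_3$, whose Stanley--Reisner ideal $(x_1x_2x_3)$ is principal and hence Cohen--Macaulay in every power; the statement as quoted presumably carries an implicit restriction (e.g.\ $n\ge 4$) from \cite{MT1}, and you should make that explicit.
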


\begin{Theorem}\label{CM 1}
Let $\D$ be a two-dimensional simplicial complex on $n \ge 5$ vertices. Then $I_\D^2$ is Cohen-Macaulay if and only if $\D$ is one of the following complexes up to a permutation of the vertices, \par
{\rm (i) } $n=5$ and 
\begin{align*}
\F(\D) & = \big\{\{1,3,4\},\{1,3,5\},\{2,3,4\},\{2,3,5\}\big\}\ \text{or}\\
\F(\D) & = \big\{ \{1,2,4\}, \{1,2,5\}, \{1,3,4\}, \{1,3,5\}, \{2,3,4\}, \{2,3,5\} \big\}. 
\end{align*} \par

\begin{center}
\psset{unit=0.9cm}
\begin{pspicture}(-1.5,-0.8)(5.2,1.5)

\pspolygon(-1.1,0.1)(-0.2,-0.5)(0,1.1)
\psline[linestyle=dashed](0,1.1)(0.2,0.1)
\psline[linestyle=dashed](1.1,-0.5)(0.2,0.1)
\psline[linestyle=dashed](-1.1,0.1)(0.2,0.1)
\psline(1.1,-0.5)(-0.2,-0.5)
\psline(1.1,-0.5)(0,1.1) 

 \rput(0,1.35){\rm 3}
 \rput(-1.3,0.2){\rm 5}
 \rput(0.35,0.27){\rm 2}
 \rput(1.3,-0.6){\rm 4}
 \rput(-0.45,-0.6){\rm 1}
 
\psline(3,0.4)(4.2,0)
\psline(4.8,0.4)(4.2,0)
\psline[linestyle=dashed](3,0.4)(4.8,0.4) 
\psline(4,-0.6)(3,0.4)
\psline(4,-0.6)(4.8,0.4)
\psline(4,-0.6)(4.2,0)
\psline(4,1.2)(3,0.4)
\psline(4,1.2)(4.8,0.4)
\psline(4,1.2)(4.2,0)

\rput(4,1.42){\rm 4}
 \rput(3.8,-0.7){\rm 5}
 \rput(2.8,0.4){\rm 2}
 \rput(5,0.4){\rm 3}
 \rput(4,-0.13){\rm 1}

\end{pspicture}
\end{center} 

{\rm (ii) } $n=6$ and 
\begin{align*}
\F(\D) & =\big\{\{1,3,6\}, \{1,4,6\}, \{2,4,6\}, \{2,5,6\}, \{3,5,6\} \big\}\ \text{or}\\
\F(\D) & = \big\{\{1,2,3\}, \{1,2,6\},\{1,3,5\},\{1,5,6\}, \{2,3,4\},\{2,4,6\},\{3,4,5\},\{4,5,6\} \big\}\ \text{or}\\
\F(\D) & = \big\{ \{1,2,4\}, \{1,2,6\}, \{1,3,4\}, \{1,3,5\}, \{1,5,6\}, \{2,3,4\}, \{2,3,6\}, \{3,5,6\} \big\}.
\end{align*}

\begin{center}
\psset{unit= 0.8cm}
\begin{pspicture}(-1.5,-1.4)(12,1.4)
\pspolygon(-0.5,0.9)(0.5,0.1)(0.05,-1)(-1.05,-1)(-1.5,0.1)
\psline(-0.5,-0.2)(-0.5,0.9)
\psline(-0.5,-0.2)(0.5,0.1)
\psline(-0.5,-0.2)(-1.5,0.1)
\psline(-0.5,-0.2)(0.05,-1)
\psline(-0.5,-0.2)(-1.05,-1)
\rput(-0.5,1.15){\rm 1}
\rput(-1.7,0.1){\rm 4}
\rput(0.7,0.1){\rm 3}
\rput(-1.25,-1.1){\rm 2}
\rput(0.25,-1.1){\rm 5}
\rput(-0.25,0.2){\rm 6} 

\pspolygon(3,0)(3.5,1)(5,0)
\pspolygon(3,0)(3.5,-1)(5.5,-1)
\pspolygon(5,0)(5.5,1)(5.5,-1)
\psline(3.5,1)(5.5,1)
\psline(3,0)(5,0)
\psline[linestyle=dashed](3.5,1)(3.5,-1)
\psline[linestyle=dashed](3.5,-1)(5.5,1)
\rput(2.8,0){\rm 1}
\rput(3.2,1.2){\rm 2}
\rput(3.2,-1.2){\rm 3}
\rput(5.3,0){\rm 6}
\rput(5.7,-1.2){\rm 5}
\rput(5.7,1.2){\rm 4} 

\pspolygon(9,0)(9.5,1)(10.8,0.5)
\pspolygon(9,0)(9.5,-1)(10.8,-0.5)
\psline(10.8,0.5)(10.8,-0.5)
\psline(8,0)(9,0)
\psline(8,0)(9.5,1)
\psline(8,0)(9.5,-1)
\psline[linestyle=dashed](9.5,1)(9.5,-1)
\psline[linestyle=dashed](9.5,-1)(10.8,0.5)
\rput(8.9,0.25){\rm 1}
\rput(9.2,1.2){\rm 2}
\rput(9.2,-1.2){\rm 3}
\rput(11,0.5){\rm 6}
\rput(11,-0.5){\rm 5}
\rput(7.7,0){\rm 4} 
\end{pspicture}
\end{center}

{\rm (iii) } $n=7$ and 
\begin{align*} 
\F(\D) & = \big\{\{1,3,6\}, \{3,5,6\}, \{2,5,6\}, \{2,4,6\}, \{1,4,6\}, \\
& \hspace{8mm}  \{1,3,7\}, \{3,5,7\}, \{2,5,7\}, \{2,4,7\}, \{1,4,7\}\big\}\ \text{or} \\
\F(\D) &= \big\{ \{1,2,4\}, \{1,2,5\}, \{1,3,4\}, \{1,3,6\}, \{1,5,6\},  \\
& \hspace{8 mm} \{2,3,4\}, \{2,3,7\}, \{2,5,7\}, \{3,6,7\}, \{5,6,7\} \big\}.
\end{align*}

\begin{center}
\psset{unit=0.8cm}
\begin{pspicture}(-0.2,-1.6)(8.2,1.6)
\pspolygon(-0.5,0)(1,0.5)(1,-0.5)
\pspolygon(1,-0.5)(1.5,-1.3)(3,0)
\pspolygon(1,0.5)(1.5,1.3)(3,0)
\psline(-0.5,0)(1.5,1.3)
\psline(-0.5,0)(1.5,-1.3)
\psline[linestyle=dashed](1.8,0)(-0.5,0)
\psline[linestyle=dashed](1.8,0)(3,0)
\psline[linestyle=dashed](1.8,0)(1.5,1.3)
\psline[linestyle=dashed](1.8,0)(1.5,-1.3)
\rput(-0.7,0){\rm 6}
\rput(0.9,0.7){\rm 1}
\rput(0.9,-0.7){\rm 3}
\rput(1.3,-1.5){\rm 5}
\rput(1.3,1.5){\rm 4}
\rput(1.55,-0.2){\rm 2} 
\rput(3.2,0){\rm 7} 

\pspolygon(5,0)(6,0)(6.5,1)
\pspolygon(6,0)(6.5,-1)(8.5,-1)
\pspolygon(6.5,1)(8.5,1)(6.5,1)
\pspolygon(8,0)(8.5,1)(8.5,-1)
\psline(6.5,1)(8,0)
\psline(6,0)(8,0)
\psline(5,0)(6.5,-1)
\psline[linestyle=dashed](6.5,1)(6.5,-1)
\psline[linestyle=dashed](6.5,-1)(8.5,1)
\rput(4.8,0){\rm 4}
\rput(5.9,-0.25){\rm 1}
\rput(6.3,1.2){\rm 2}
\rput(6.3,-1.2){\rm 3}
\rput(7.8,-0.25){\rm 5}
\rput(8.7,1.1){\rm 7} 
\rput(8.7,-1.1){\rm 6} 
\end{pspicture}
\end{center}

{\rm (iv) } $n=8$ and
\begin{align*}
\F(\D) & =  \big\{\{1,3,6\},\{1,3,8\},\{1,4,7\},\{1,4,8\},\{1,6,7\},\{2,4,7\},\\
& \hspace{8mm}\{2,4,8\},\{2,5,6\},\{2,5,8\},\{2,6,7\},\{3,5,6\},\{3,5,8\}\big\}.
\end{align*}

\begin{center}
\psset{unit=0.8cm}
\begin{pspicture}(-0.2,-1.6)(3.2,1.6)
\pspolygon(-0.5,0)(1,0.5)(1,-0.5)
\pspolygon(1,-0.5)(1.5,-1.3)(3.5,-0.5)
\pspolygon(1,0.5)(1.5,1.3)(3.5,0.5)
\psline(3.5,-0.5)(3.5,0.5)
\psline(1,0.5)(3.5,-0.5)
\psline(-0.5,0)(1.5,1.3)
\psline(-0.5,0)(1.5,-1.3)
\psline[linestyle=dashed](1.8,0)(-0.5,0)
\psline[linestyle=dashed](1.8,0)(3.5,0.5)
\psline[linestyle=dashed](1.8,0)(3.5,-0.5)
\psline[linestyle=dashed](1.8,0)(1.5,1.3)
\psline[linestyle=dashed](1.8,0)(1.5,-1.3)
\rput(-0.7,0){\rm 8}
\rput(0.9,0.7){\rm 1}
\rput(0.9,-0.7){\rm 3}
\rput(1.3,-1.5){\rm 5}
\rput(1.3,1.5){\rm 4}
\rput(1.5,-0.25){\rm 2} 
\rput(3.7,-0.5){\rm 6} 
\rput(3.7,0.5){\rm 7} 
\end{pspicture}
\end{center}
\end{Theorem}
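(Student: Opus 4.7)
The strategy rests on the equivalence: $I_\D^2$ is Cohen-Macaulay if and only if $I_\D^{(2)} = I_\D^2$ and $I_\D^{(2)}$ is Cohen-Macaulay. The forward implication holds because a Cohen-Macaulay ideal is unmixed, so its associated primes coincide with the minimal primes of $I_\D$, which forces $I_\D^2$ to equal its symbolic square $I_\D^{(2)} = \bigcap_{F \in \F(\D)} P_F^2$. Thus both the equality criteria (Theorem~\ref{5} when $n = 5$, Theorem~\ref{case 2} when $n \ge 6$) and the Cohen-Macaulayness criterion (Theorem~\ref{second 2}) must hold simultaneously.

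For the forward direction, I first bound $n$. Corollary~\ref{Ramsey}(i) yields $n \le 13$ via the classical Ramsey bound. To sharpen this, I invoke Lemma~\ref{link}: for every vertex $v$, the ideal $I_{\lk \{v\}}^2$ is Cohen-Macaulay, and since $\lk \{v\}$ is a one-dimensional complex, Lemma~\ref{dim 1}(i) forces it to be a path of length $2$ or a cycle of length $4$ or $5$. Consequently every vertex has degree $3$, $4$, or $5$ in $\D_1$, which sharply limits the edge density of $\D_1$. Combined with the Ramsey constraint on $\D_1$ and the conditions of Theorems~\ref{case 2} and~\ref{second 2}, this should rule out all $n \ge 9$.

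For each value $n \in \{5,6,7,8\}$ I then enumerate admissible complexes. When $n = 5$, Theorem~\ref{5} restricts attention to complexes whose vertex set splits as two nonfaces of sizes $2$ and $3$; filtering by the Cohen-Macaulayness conditions (ii)--(iv) of Theorem~\ref{second 2} produces exactly the two listed complexes. When $n = 6$, I use Theorem~\ref{case 2} together with the two-disjoint-triangles / three-disjoint-nonedges structure of Theorem~\ref{6} to list candidates, then apply Theorem~\ref{second 2} to obtain the three listed complexes. When $n = 7$ or $n = 8$, the induced cycles in $\overline{\D_1}$ guaranteed by Proposition~\ref{Ramsey properties}, together with the vertex-link constraint and the facet-generation arguments analogous to Theorem~\ref{case 3}, determine $\D$ up to a permutation of vertices, yielding the two listed complexes for $n = 7$ and the single listed complex for $n = 8$. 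For the converse, I verify for each listed complex that the combinatorial conditions of Theorems~\ref{case 2} and~\ref{second 2} hold, which is routine.

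The main obstacle is the systematic enumeration in the cases $n = 7$ and $n = 8$: one must simultaneously track the Ramsey-type constraints on $\D_1$, the vertex-link constraints from Lemma~\ref{link}, and the intersection/connectivity conditions (ii)--(iv) of Theorem~\ref{second 2}, and classify the resulting facet sets up to a permutation of the vertices. Ruling out $9 \le n \le 13$ is also delicate and will likely require combining the degree bound from Lemma~\ref{dim 1}(i) with Mantel-type edge counts on the triangle-free graph $\overline{\D_1}$ and the common-neighbor requirement of Theorem~\ref{second 2}(ii).
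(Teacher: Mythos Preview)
Your opening reduction --- $I_\D^2$ Cohen--Macaulay iff $I_\D^{(2)}=I_\D^2$ and $I_\D^{(2)}$ Cohen--Macaulay --- is exactly how the paper begins, and your idea of constraining vertex links via Lemma~\ref{link} and Lemma~\ref{dim 1} is one that the paper also exploits. But two points in your plan are genuine gaps.

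First, the bound $n\le 8$. The paper does \emph{not} obtain it from degree considerations on links. Instead it introduces a dichotomy you do not mention: either (Case~1) every triangle of $\D_1$ is a facet of $\D$, or (Case~2) some triangle of $\D_1$ is not a facet. In Case~1 one shows directly that $\D_1$ contains no $K_4$: if $V$ is a $4$-clique then all four triangles of $V$ are facets, and then no $5$-set containing $V$ can be split into two nonfaces of sizes $2$ and $3$, contradicting Theorem~\ref{case 2}(iv). Thus $\D_1$ is a Ramsey $(4,3)$-graph and $n\le 8$. In Case~2, Theorem~\ref{second 2}(iv) produces a common apex over the missing triangle, and then Lemma~\ref{join face} forces $n\le 7$. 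Your route --- links have $3$, $4$, or $5$ vertices, so $\deg_{\D_1}(v)\le 5$, combined with Mantel on the triangle-free graph $\overline{\D_1}$ --- gives only $n-5\le\deg(v)\le 5$, hence $n\le 10$; the further reduction to $n\le 8$ is not ``delicate'' so much as unavailable without the Case~1/Case~2 split. That split also organizes the enumeration: the second complexes listed under (i), (ii), (iii) all arise in Case~2, the rest in Case~1.

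Second, your treatment of $n=6$ is incorrect. You invoke the ``two disjoint triangles / three disjoint nonedges'' structure of Theorem~\ref{6}, but that theorem characterizes $I_\D^{(m)}=I_\D^m$ for $m\ge 3$, not for $m=2$. Concretely, the first $n=6$ complex in the list --- the cone over a $5$-cycle --- has $\overline{\D_1}$ equal to a $5$-cycle together with the isolated vertex $6$, which has no matching of size $3$; so your filter would wrongly discard it. For $n=6$ the paper works inside Case~1 (splitting further on whether $\overline{\D_1}$ has an induced $5$-cycle, and using K\"onig's theorem in the bipartite subcase) and Case~2 separately; Theorem~\ref{6} plays no role here.
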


\begin{proof}
Assume that $I_\D^2$ is Cohen-Macaulay. Then $I_\D^{(2)} = I_\D^2$ and $I_\D^{(2)}$ is Cohen-Macaulay. We distinguish two cases. \smallskip

{\em Case 1}: Every triangle of $\D_1$ is a facet of $\D.$ \smallskip

If $\D_1$ contains a complete subgraph on a set $V$ of 4 vertices, every triangle of $V$ is a facet of $\D$. Let $i \in [n] \setminus V,$ then $V \cup \{i\}$ can't be divided into two nonfaces of size two and three. Thus, we have $\prod_{j \in V \cup \{i\}}x_j \in I_\D^{(2)}\setminus I_\D^2,$ a contradiction. Therefore, $\D_1$ doesn't contain $K_4$. Together with Lemma \ref{nontriangle}, this implies that $\D_1$ is a Ramsey (4,3)-graph. Hence $n \le 8$ \cite{Ra}. \par

Let $n = 5$. By Theorem \ref{5}  we may assume that $\{1,2\},\{3,4,5\} \notin \D$.
Since the vertices 1,2 must belong to some facets of $\D$, we may also assume $\{1,3,4\}, \{2,3,5\} \in \D$. Since $\{3,4,5\}$ can't be a triangle of $\D_1$, $\{4,5\} \notin \D$. 
If $\{2,3,4\} \notin \D$, then $\star\{2\} \cap \star\{4\}$ is generated by the vertex $3$, a contradiction to Theorem \ref{second 2} (ii). Therefore, $\{2,3,4\} \in \D$. Similarly, we also have $\{1,3,5\} \in \D$. Since there are no further possibilities for facets of $\D$,
$$\F(\D) = \big\{\{1,3,4\},\{1,3,5\},\{2,3,4\},\{2,3,5\}\big\}.$$
In this case, $I_\D = (x_1x_2,x_4x_5)$, which is a complete intersection. Hence $I_\D^m$ is Cohen-Macaulay for all $m \ge 1$. \par

Let $n=6.$ If $\overline{\D_1}$ has an induced cycle of length 5, say on the ordered vertices $1,2,3,4,5,$ then $1,3,5,2,4$ are the ordered vertices of an induced cycle $C$ of length 5 of $\D_1$. Every edge of $C$ must belong to a facet containing 6.  
From this it follows that $\D$ is the cone over $C$ with:
$$\F(\D)=\big\{\{1,3,6\}, \{1,4,6\}, \{2,4,6\}, \{2,5,6\}, \{3,5,6\} \big\}.$$
In this case, we may also consider $I_\D$ as the Stanley-Reisner ideal of $C$. Hence $I_\D^2$ is Cohen-Macaulay but $I_\D^m$ is not Cohen-Macaulay for all $m \ge 3$ by Lemma \ref{dim 1}. \par

If $\overline{\D_1}$ doesn't have an induced cycle of length 5, then $\overline{\D_1}$ is a bipartite graph because $\overline{\D_1}$ doesn't contain $K_3$ by Theorem \ref{case 2}. Since $\D_1$ doesn't contain $K_4$, the maximal size of an independent set of $\overline{\D_1}$ is $\le 3$. 
Note that the complement of an independent set is a vertex cover. Then the minimal size of a vertex cover of $\overline \D_1$ is $\ge 3$.
By K\"onig's theorem for a bipartite graph, the minimal size of a vertex cover equals  the maximal size of a matching. Therefore, $\overline \D_1$ has a matching of 3 edges, say $\{1,4\},\{2,5\},\{3,6\}$. Furthermore, since the vertices of the bipartite graph $\overline{\D_1}$ can be divided into two independent sets of size 3, we may assume that $\{1,2,3\},\{4,5,6\}$ are triangles of $\D_1$. From this it follows that $\{1,2,3\},\{4,5,6\} \in \D$. If $\{1,5\} \notin \D$, then $\star\{1\} \cap \star\{5\}$ is contained in the zero-dimensional complex generated by the vertices $3,6$, which contradicts Theorem \ref{second 2} (ii). Thus, $\{1,5\} \in \D$. Similarly, we also have $\{1,6\},\{2,4\},\{2,6\},\{3,4\},\{3,5\} \in \D$.
Hence 
$$\{1,2,6\},\{1,3,5\},\{1,5,6\}, \{2,3,4\},\{2,4,6\},\{3,4,5\} \in \D.$$
Since there are no further possibilities for the faces of $\D$, we can conclude that
$$\mathcal{F}(\Delta)= \big\{\{1,2,3\}, \{1,2,6\},\{1,3,5\},\{1,5,6\}, \{2,3,4\},\{2,4,6\},\{3,4,5\},\{4,5,6\} \big\}.$$
In this case, $I_\D = (x_1x_4,x_2x_5,x_3x_6)$, which is a complete intersection. Hence $I_\D^m$ is Cohen-Macaulay for all $m \ge 1$. \par

Let $n = 7.$ By Lemma \ref{Ramsey properties}, $\overline{\D_1}$ has an induced cycle of length 5 or 7. \par

If $\overline{\D_1}$ has an induced cycle of length 5, say on the ordered vertices $1,2,3,4,5$. Then $1,3,5,2,4$ are the ordered vertices of an induced cycle $C$ of length 5 of $\D_1$.  If  $\{6,7\} \in \D$, we may assume that $\{1,6,7\} \in \D$. Then $\lk \{1\}$ has 4 vertices $3,4,6,7$.  By Lemma \ref{link}, $I_{\lk \{1\}}^2$ is Cohen-Macaulay. By Lemma \ref{dim 1}, $\lk \{1\}$ must be a cycle of length 4. Since $\{6,7\} \in \lk \{1\}$, this implies $\{3,4\} \in \lk \{1\}$,  a contradiction.  So $\{6,7\} \not\in \D$. Hence $\lk \{6\}$ is a subgraph of the cycle $C$. By Lemma \ref{link}, $I_{\lk \{6\}}^2$ is Cohen-Macaulay. By Lemma \ref{dim 1}, $\lk \{6\}$ can be only a path of length 2 or the cycle  $C$. If $\lk \{6\}$ is a path of length 2, say on the vertices $1,3,4$, then $\star\{2\} \cap \star\{6\}$ is generated by the vertex $4$, a contradiction to Theorem \ref{second 2} (ii). So $\lk \{6\} = C$. Similarly, we also have $\lk \{7\} = C$. From this it follows that 
\begin{align}
\F(\D) &= \big\{ \{1,3,6\}, \{3,5,6\}, \{2,5,6\}, \{2,4,6\}, \{1,4,6\}, \notag \\
& \hspace{8 mm} \{ \{1,3,7\}, \{3,5,7\}, \{2,5,7\}, \{2,4,7\}, \{1,4,7\}\big\}. \notag
\end{align}
Using Theorem \ref{case 2} and Theorem \ref{second 2} we can verify that $I_\D^2$ is Cohen-Macaulay.
By Theorem \ref{case 3} and Theorem \ref{case 4}, $I_\D^{(m)} \neq I_\D^m$ for all $m \geq 3$. 
Hence $I_\D^m$ is not Cohen-Macaulay for all $m \geq 3.$ \par

If  $\overline{\D_1}$ has an induced cycle of length 7, then we must have
$$\F(\D)  = \big\{\{1,3,5\},\{1,3,6\}, \{1,4,6\}, \{2,4,6\},\{2,4,7\},\{2,5,7\},\{3,5,7\} \big\}.$$
We see that $\star \{1\} \cap \star \{2\}$ is generated by the vertex 5 and $\{4,6\}.$ Hence $I_\D^{(2)}$ is not Cohen-Macaulay by Theorem \ref{second 2} (ii). By \cite[Corollary 4.4]{MT2}, the non-Cohen-Macaulayness of $I_\D^{(2)}$ implies the non-Cohen-Macaulayness of $I_\D^{(m)}$ for all $m \ge 2$. Therefore, $I_\D^m$ is not Cohen-Macaulay for all $m \ge 2$.  \par

If $n = 8$,  $\overline{\D_1}$ has an induced cycle of length 5, say on the ordered vertices $1,2,3,4,5$  by Lemma \ref{Ramsey properties}.  Then $1,3,5,2,4$ are the ordered vertices of a cycle of length 5 of $\D_1$. Since $\overline{\D_1}$ doesn't contain $K_3$, we may assume that $\{6,7\} \in \D$. Furthermore,  each of the vertices 6 and 7 must be adjacent to at least one vertex of any edge of $\overline{\D_1}$. From this it follows that 6 and 7 are adjacent to at least 3 vertices among 1,2,3,4,5. Hence we can find a vertex, say 1 which is adjacent to both $6$ and $7$. Since $1,6,7$ form a triangle of $\D_1$, $\{1,6,7\}$ is a facet of $\D$ by the assumption of Case 1. It follows that $\lk \{1\}$ contains two non-adjacent vertices 3,4 and $\{6,7\}$. By Lemma \ref{link}, $I_{\lk \{1\}}^2$ is Cohen-Macaulay. By Lemma \ref{dim 1}, $\lk \{1\}$ must be an induced cycle of length 5. The fifth vertex of this cycle must be 8. Without restriction we may assume that this cycle has the ordered vertices $3,6,7,4,8$. Then $\{1,3,6\},\{1,3,8\},\{1,4.7\},\{1,4,8\}$ are facets of $\D$. 
Since $\{3,7\}$ is not an edge of this cycle,  $\{1,3,7\} \notin \D$. Hence $\{3,7\} \notin \D$. Therefore, $\lk \{3\}$ has four vertices 1,5,6,8. By Lemma \ref{link} and Lemma \ref{dim 1}, $\lk \{3\}$ must be a cycle of length 4. Since $\{1,5\}\notin \D$, this cycle has the ordered vertices 1,6,5,8. Hence $\{3,5,6\}, \{3,5,8\}$ are facets of $\D$.
Similarly, if we consider $\lk\{4\}$, we see that $\{2,4,7\},\{2,4,8\}$ are facets of $\D$. 
If $\{2,6\},\{5,7\} \in \D$, we would have $\{2,6,7\},\{5,6,7\} \in \D$,  hence $\{6,7\}$ belongs to three facets of $\D$, a contradiction to Lemma \ref{join face}. So we have $\{2,6\} \notin \D$ or $\{5,7\} \notin \D$. Without loss of generality we may assume that $\{5,7\} \notin \D$. Then $\lk \{5\}$ has four vertices 2,3,6,8. By Lemma \ref{link} and Lemma \ref{dim 1}, $\lk \{5\}$ must be a cycle of length 4. Since $\{2,3\} \not\in \D$, this cycle has the ordered vertices 2,6,3,8. Hence $\{2,5,6\},\{2,5,8\}$ are facets of $\D$. Now, since $2,6,7$ form a triangle of $\D_1$, $\{2,6,7\}$ is a facet of $\D$.  Using Lemma \ref{join face} we can see that there are no further facets of $\D$.
Therefore,
\begin{align*}
\F(\D) & =  \big\{\{1,3,6\},\{1,3,8\},\{1,4,7\},\{1,4,8\},\{1,6,7\},\{2,4,7\},\\
& \hspace{0.8cm}\{2,4,8\},\{2,5,6\},\{2,5,8\},\{2,6,7\},\{3,5,6\},\{3,5,8\}\big\}.
\end{align*}
Using Theorem \ref{case 2} and Theorem \ref{second 2} we can verify that $I_\D^2$ is Cohen-Macaulay.
By Theorem \ref{case 3} and Theorem \ref{case 4}, $I_\D^{(m)} \neq I_\D^m$ for all $m \geq 3$. 
Hence $I_\D^m$ is not Cohen-Macaulay for all $m \geq 3.$ \smallskip

{\em Case 2}: $\D_1$ contains a triangle, say $\{1,2,3\},$ which does not belong to $\D.$ \smallskip

By Theorem \ref{second 2} (iv), there is a vertex, say $4$ such that $\{1,2,4 \},$ $\{1,3,4\},$ $\{2,3,4 \} \in \D.$
Let $i \neq 1,2,3,4.$ By Theorem \ref{case 2} (iii), at least one of the sets $\{1,2,i\},$ $\{1,3,i\}$ or $\{2,3,i \}$ must be a facet of $\D.$  By Lemma \ref{join face}, each edge of $\{1,2,3\}$  belong to at most two facets of $\D$. Hence for each edge of $\{1,2,3\}$, 
there is at most one vertex outside $\{1,2,3,4\}$ which together with the edge forms a facet of $\D$.  From this it follows that there are at most three vertices $i \neq 1,2,3,4$ so that $n \leq 7.$ \par

Let $n=5.$ By Theorem \ref{5}, we must have $\{4,5\} \notin \D$. If $\{1,5\} \notin \D$,  $\star\{1\} \cap \star\{5\}$ is contained in the zero-dimensional complex generated by the vertices 2,3, which contradicts Theorem \ref{second 2} (ii). So $\{1,5\} \in \D$. Similarly, we also have $\{2,5\} \in \D$. Hence $\{1,2,5\}$ is a triangle of $\D$. Since there is no vertex $i \neq 1,2,5$ such that $\{1,2,i\},$ $\{1,5,i\}, $ $\{2,5,i\} \in \D,$ it follows from Theorem \ref{second 2} (iv)  that $\{1,2,5\} \in \D.$ Similarly, we also have $\{1,3,5\},\{2,3,5\} \in \D$. Since there are no further possibilities for facets of $\D$, 
$$\F(\D)= \big\{ \{1,2,4\}, \{1,2,5\}, \{1,3,4\}, \{1,3,5\}, \{2,3,4\}, \{2,3,5\} \big\}. $$
In this case, $I_\D = (x_1x_2x_3,x_4x_5)$, which is a complete intersection. Thus, $I_\D^m$ is a Cohen-Macaulay ideal for all $m \geq 2.$ \par

Let $n=6.$ Without restriction we may assume $\{1,2,6\}, \{1,3,5\} \in \D.$
Applying Theorem \ref{case 2} (iv) to the 5-sets containing $1,2,3,4$ we see that $\{4,5\}, \{4,6\} \notin \D$. By Theorem \ref{case 2} (i), this implies $\{5,6\} \in \D$. Hence $\{1,5,6\}$ is a triangle of $\D_1$. Since $\{1,4,5\}, \{1,4,6\}, \{4,5,6\} \notin \D$,  $\{1,5,6\} \in \D$ by Theorem \ref{case 2} (iii). By Lemma \ref{join face},  $\{1,2,5\},\{1,3,6\} \notin \D.$ If $\{2,5\},$ $\{3,6\} \notin \D$, then $\star\{2\} \cap \star\{5\}$ is generated by the vertex 3 and $\{1,6\}$, which contradicts Theorem \ref{second 2} (ii). So we may assume that $\{3,6\} \in \D$. 
Now, applying Theorem \ref{case 2} (iv) to the vertices $1,2,3,5,6$ we can easily verify that $\{2,5\} \notin \D.$ Since $\{2,3,6\}$  and $\{3,5,6\}$ are triangles of $\D_1$ and since there are no cones over these triangles, we must have $\{2,3,6\}, \{3,5,6\} \in \D$ by Theorem \ref{second 2} (iv). Now using Lemma \ref{join face} we can check that there are no further facets of $\D$. Therefore,
$$\F(\D)= \big\{ \{1,2,4\}, \{1,2,6\}, \{1,3,4\}, \{1,3,5\}, \{1,5,6\}, \{2,3,4\}, \{2,3,6\}, \{3,5,6\} \big\}.$$
Using Theorem \ref{case 2} and Theorem \ref{second 2} we can easily check that $I_\D^2$ is Cohen-Macaulay.
By Theorem \ref{case 3} and Theorem \ref{case 4}, $I_\D^{(m)} \neq I_\D^m$ for all $m \geq 3$. Hence $I_\D^m$ is not Cohen-Macaulay for all $m \geq 3.$ \par

Let $n=7.$ Without restriction we may assume that $\{1,2,5\},$ $\{1,3,6\},$ $\{2,3,7\} \in \D.$ 
Applying Theorem \ref{case 2} (iv) to the 5-sets containing $1,2,3,4$ we see that $\{4,5\},$ $\{4,6\},$ $\{4,7\} \notin \D$. Hence $\{5,6\},\{5,7\},\{6,7\} \in \D$ by Theorem \ref{case 2} (i). It follows that $\{1,5,6\}, \{2,5,7\},\{ 3,6,7\}, \{5,6,7\}$ are triangles of $\D_1$. Since there are no facets containing the vertex 4 and an edge of these triangles, Theorem \ref{case 2} (iii) implies $\{1,5,6\}, \{2,5,7\}$, $\{3,6,7\}, \{5,6,7\} \in \D.$
Using Lemma \ref{join face} we can verify that there are no further possibilities for facets of $\D$. Therefore,
\begin{align*}
\F(\D) &= \big\{ \{1,2,4\}, \{1,2,5\}, \{1,3,4\}, \{1,3,6\}, \{1,5,6\},  \\
& \hspace{8 mm} \{2,3,4\}, \{2,3,7\}, \{2,5,7\}, \{3,6,7\}, \{5,6,7\} \big\}.
\end{align*}
Using Theorem \ref{case 2} and Theorem \ref{second 2} we can easily check that $I_\D^2$ is Cohen-Macaulay.
By Theorem \ref{case 3} and Theorem \ref{case 4}, $I_\D^{(m)} \neq I_\D^m$ for all $m \geq 3$. Hence $I_\D^m$ is not Cohen-Macaulay for all $m \geq 3.$ \smallskip

Summing up the above analysis we obtain the assertions of Theorem \ref{CM 1}.
\end{proof}

In checking the Cohen-Macaulayness of $I_\D^2$ for some complexes in the above proof we have to test the Cohen-Macaulayness of these complexes by Theorem \ref{second 2}. By a result of Munkres \cite[Corollary 5.4.6]{BrH}, the Cohen-Macaulayness of these complexes follows from the fact that their geometric realization are homeomorphic to a sphere as can be seen from the pictures of Theorem \ref{CM 1}. Thus, the Cohen-Macaulayness of $I_\D^2$ doesn't depend on the characteristic of the base field for two-dimensional complexes. This displays a different behavior than the Cohen-Macaulayness of $I_\D$ \cite[Section 5.3]{BrH}. \smallskip

The proof of  Theorem \ref{CM 1} also gives information on  the Cohen-Macaulayness of $I_\D^m$ for $m\ge 3$. 

\begin{Theorem} \label{CM 2}
Let $\D$ be a two-dimensional simplicial complex on $n \ge 5$ vertices. Then $I_\D^m$ is Cohen-Macaulay for some $m \ge 3$ resp. for all $m \ge 1$ if and only if $\D$ is one of the following complexes up to a permutation of the vertices, \par
{\rm (i) } $n=5$ and 
\begin{align*}
\F(\D) & = \big\{\{1,3,4\},\{1,3,5\},\{2,3,4\},\{2,3,5\}\big\}\ \text{or}\\
\F(\D) & = \big\{ \{1,2,4\}, \{1,2,5\}, \{1,3,4\}, \{1,3,5\}, \{2,3,4\}, \{2,3,5\} \big\}. 
\end{align*} \par
{\rm (ii) } $n=6$ and 
$$\F(\D) = \big\{\{1,2,3\}, \{1,2,6\},\{1,3,5\},\{1,5,6\}, \{2,3,4\},\{2,4,6\},\{3,4,5\},\{4,5,6\} \big\}.$$
\end{Theorem}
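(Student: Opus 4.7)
The plan is to reduce to inspection of the list already produced by Theorem \ref{CM 1}. Suppose $I_\D^m$ is Cohen--Macaulay for some $m \ge 3$. Then $R/I_\D^m$ has no embedded associated primes, so $I_\D^m$ coincides with the intersection of its minimal primary components; in other words, $I_\D^{(m)} = I_\D^m$. In particular $I_\D^{(m)}$ is itself Cohen--Macaulay, and by \cite[Corollary 4.4]{MT2} this forces $I_\D^{(2)}$ to be Cohen--Macaulay. Invoking Corollary \ref{preservation}(i) we also get $I_\D^{(2)} = I_\D^2$, so $I_\D^2$ is Cohen--Macaulay and $\D$ must appear in the list of Theorem \ref{CM 1}.

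It then suffices to scan the eight complexes of that list and decide for which of them $I_\D^m$ remains Cohen--Macaulay for some $m \ge 3$. The necessary condition $I_\D^{(m)} = I_\D^m$ has already been settled in the proof of Theorem \ref{CM 1}, where Theorem \ref{6}, Theorem \ref{case 3} and Theorem \ref{case 4} are applied to each candidate. Explicitly, for the $n=6$ complex with $I_\D$ the Stanley--Reisner ideal of a $5$-cycle, for the third $n=6$ complex, for both $n=7$ complexes, and for the $n=8$ complex, one verifies $I_\D^{(m)} \neq I_\D^m$ for every $m \ge 3$; this rules out Cohen--Macaulayness of $I_\D^m$ at once. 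Hence these five cases do not appear in the list of Theorem \ref{CM 2}.

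The three surviving complexes are the two on five vertices, with $I_\D = (x_1x_2,x_4x_5)$ and $I_\D = (x_1x_2x_3,x_4x_5)$ respectively, together with the second $n=6$ complex, whose ideal is $I_\D = (x_1x_4,x_2x_5,x_3x_6)$. In each of these three cases $I_\D$ is a complete intersection in $R$, so every power $I_\D^m$ is Cohen--Macaulay (for instance via a Koszul-type argument applied to a regular sequence of generators). This simultaneously gives the ``for some $m \ge 3$'' half and the ``for all $m \ge 1$'' half of the statement.

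The step demanding the most care is the opening reduction, since one must correctly invoke the containment $I_\D^{(m)} = I_\D^m$ forced by Cohen--Macaulayness of $I_\D^m$, together with \cite[Corollary 4.4]{MT2} and Corollary \ref{preservation}(i), in order to descend from Cohen--Macaulayness of $I_\D^m$ to Cohen--Macaulayness of $I_\D^2$. Once that reduction is in place, the remainder of the argument is simply a pass through the list built in the proof of Theorem \ref{CM 1}, keeping track of the annotations on higher powers that were already recorded there.
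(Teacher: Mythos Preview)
Your argument is correct and follows essentially the same route as the paper: you reduce to the list of Theorem \ref{CM 1} via \cite[Corollary 4.4]{MT2} and Corollary \ref{preservation}(i), then read off from the proof of Theorem \ref{CM 1} which of the eight complexes have $I_\D^{(m)}=I_\D^m$ for some $m\ge 3$, leaving exactly the three complete intersections. The only difference is that you spell out more explicitly why Cohen--Macaulayness of $I_\D^m$ forces $I_\D^{(m)}=I_\D^m$ (via unmixedness), which the paper takes for granted.
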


\begin{proof} 
By \cite[Corollary 4.4]{MT2} and Corollary \ref{preservation} (i), if $I_\D^m$ is Cohen-Macaulay for some $m \ge 3$, then  $I_\D^2$ is Cohen-Macaulay. Thus, we only need to check for which case of Theorem \ref{CM 1},  $I_\D^m$ is Cohen-Macaulay for some $m \ge 3$ resp. for all $m \ge 1$.  But this has been done in the proof of Theorem \ref{CM 1}.
\end{proof}

By \cite{CN}, $I_\D$ is a complete intersection if and only if $I_\D^m$ is Cohen-Macaulay for all large $m$ (or all $m \ge 1$). We can improve this result as follows.

\begin{Corollary}
Let $\D$ be a two-dimensional simplicial complex on $n \ge 5$ vertices. Then $I_\D$ is a complete intersection if and only if $I_\D^m$ is Cohen-Macaulay for some $m \ge 3$.
\end{Corollary}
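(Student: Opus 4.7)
The plan is to prove both directions separately, leaning heavily on Theorem \ref{CM 2} for the nontrivial implication. The forward direction (complete intersection implies all powers Cohen-Macaulay) is classical: if $I_\D$ is a complete intersection, then the associated graded ring of $I_\D$ is a polynomial ring over $R/I_\D$, which makes $R/I_\D^m$ Cohen-Macaulay for every $m \ge 1$; in particular $I_\D^m$ is Cohen-Macaulay for every $m \ge 3$. This is the easy direction and already credited to \cite{CN} in the excerpt.

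For the converse I would invoke Theorem \ref{CM 2} to reduce to a finite case check. That theorem says that if $I_\D^m$ is Cohen-Macaulay for some $m \ge 3$ and $n \ge 5$, then up to relabeling of the vertices $\D$ must be one of exactly three explicit complexes: the two five-vertex complexes
\begin{align*}
\F(\D) & = \big\{\{1,3,4\},\{1,3,5\},\{2,3,4\},\{2,3,5\}\big\},\\
\F(\D) & = \big\{\{1,2,4\}, \{1,2,5\}, \{1,3,4\}, \{1,3,5\}, \{2,3,4\}, \{2,3,5\} \big\},
\end{align*}
and the six-vertex complex
$$\F(\D) = \big\{\{1,2,3\}, \{1,2,6\},\{1,3,5\},\{1,5,6\}, \{2,3,4\},\{2,4,6\},\{3,4,5\},\{4,5,6\} \big\}.$$

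The next step is to read off $I_\D$ directly from each facet list via the primary decomposition $I_\D = \bigcap_{F \in \F(\D)} P_F$ (or equivalently from the minimal nonfaces). For the first complex one obtains $I_\D = (x_1x_2, x_4x_5)$; for the second, $I_\D = (x_1x_2x_3, x_4x_5)$; for the third, $I_\D = (x_1x_4, x_2x_5, x_3x_6)$. In each case the generators involve pairwise disjoint sets of variables, so $I_\D$ is generated by a regular sequence and is therefore a complete intersection. These computations were in fact already recorded in the case analysis within the proof of Theorem \ref{CM 1}, so no new work is needed.

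No step is a real obstacle: the heavy lifting has been done in Theorem \ref{CM 2} (which itself rests on Theorems \ref{case 2}--\ref{case 4}, Theorem \ref{second 2}, and the Ramsey-theoretic inputs of Section 1). The only thing to be careful about is the small-$n$ hypothesis: the statement assumes $n \ge 5$ precisely because for $n \le 4$ one has $I_\D = 0$ or $I_\D$ principal, which is automatically a complete intersection but is excluded from the three cases of Theorem \ref{CM 2}. Hence the equivalence holds exactly as stated.
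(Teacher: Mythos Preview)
Your proposal is correct and follows essentially the same approach as the paper: invoke Theorem \ref{CM 2} to reduce the nontrivial direction to the three explicit complexes, then verify that $I_\D$ equals $(x_1x_2,x_4x_5)$, $(x_1x_2x_3,x_4x_5)$, or $(x_1x_4,x_2x_5,x_3x_6)$ respectively, each a complete intersection. The paper's proof is terser but identical in substance, with the forward direction handled by the reference to \cite{CN} just as you do.
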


\begin{proof}
It suffices to show that $I_\D$ is a complete intersection in all cases of Theorem \ref{CM 2}.
In fact, we have up to a permutation of the variables,
\begin{align*}
I_\D & = (x_1x_2,x_4x_5)\ \text{or}\\
I_\D & =  (x_1x_2x_3,x_4x_5)\ \text{or}\\
I_\D & = (x_1x_4,x_2x_5,x_3x_6).
\end{align*}
\end{proof}

The above results show that there are complexes for which $I_\D^2$ is Cohen-Macaulay but $I_\D^m$ is not Cohen-Macaulay for all $m \ge 3$ and that if $I_\D^m$ is Cohen-Macaulay for some $m \ge 3$, then $I_\D$ is a complete intersection. Since this phenomenon also holds in the case $\dim \D = 1$,  it is quite natural to ask whether the same also holds in general.

\end{document}